\newcommand{\nc}{\newcommand}
\nc{\fg}{\mathfrak{f} }     \nc{\vg}{\mathfrak{v} }       \nc{\wg}{\mathfrak{w} }
\nc{\zg}{\mathfrak{z} }     \nc{\ngo}{\mathfrak{n} }      \nc{\kg}{\mathfrak{k} }
\nc{\mg}{\mathfrak{m} }     \nc{\bg}{\mathfrak{b} }       \nc{\ggo}{\mathfrak{g} }
\nc{\sog}{\mathfrak{so} }
\nc{\sug}{\mathfrak{su} }   \nc{\spg}{\mathfrak{sp} }     \nc{\slg}{\mathfrak{sl} }
\nc{\glg}{\mathfrak{gl} }   \nc{\cg}{\mathfrak{c} }       \nc{\rg}{\mathfrak{r} }
\nc{\hg}{\mathfrak{h} }     \nc{\tg}{\mathfrak{t} }       \nc{\ug}{\mathfrak{u} }
\nc{\dg}{\mathfrak{d} }     \nc{\ag}{\mathfrak{a} }       \nc{\pg}{\mathfrak{p} }
\nc{\sg}{\mathfrak{s} }     \nc{\affg}{\mathfrak{aff} }
\nc{\ggob}{\overline{\mathfrak{g}} }
\nc{\pca}{\mathcal{P}}       \nc{\nca}{\mathcal{N}}       \nc{\lca}{\mathcal{L}}
\nc{\oca}{\mathcal{O}}       \nc{\mca}{\mathcal{M}}       \nc{\tca}{\mathcal{T}}
\nc{\aca}{\mathcal{A}}       \nc{\cca}{\mathcal{C}}       \nc{\gca}{\mathcal{G}}
\nc{\sca}{\mathcal{S}}       \nc{\hca}{\mathcal{H}}       \nc{\bca}{\mathcal{B}}
\nc{\dca}{\mathcal{D}}       \nc{\rca}{\mathcal{R}}
\nc{\val}{\operatorname{val}}
\nc{\vp}{\varphi}
\nc{\ddt}{\tfrac{{\rm d}}{{\rm d}t}}
\nc{\dpar}{\tfrac{\partial}{\partial t}}
\nc{\im}{\mathtt{i}}                 
\newcommand{\fna}[5]{
   \begin{array}{rccl}
       #1: & \hspace{-2mm} #2 &\hspace{-2mm} \longrightarrow &\hspace{-2mm} {#3}          \\
        & \hspace{-2mm} #4 &\hspace{-2mm} \longmapsto     &\hspace{-2mm} {#5}
    \end{array}}
\nc{\SO}{\mathrm{SO}}           \nc{\Spe}{\mathrm{Sp}}          \nc{\Sl}{\mathrm{SL}}
\nc{\SU}{\mathrm{SU}}           \nc{\U}{\mathrm{U}}
\nc{\Se}{\mathrm{S}}            \nc{\Cl}{\mathrm{Cl}}           \nc{\Spein}{\mathrm{Spin}}
\nc{\Pin}{\mathrm{Pin}}
\nc{\Or}{\mathrm{O}_n(\RR)}
\nc{\Glr}{\mathrm{GL}_n(\RR)}   \nc{\Glc}{\mathrm{GL}_n(\CC)}   \nc{\Glv}{\mathrm{GL}(V)}    \nc{\Glk}{\mathrm{GL}_n(\fk)}   \nc{\Gl}{\mathrm{GL}}
\nc{\Grp}{\mathrm{G}}
\nc{\g}{\mathfrak{gl}_n(\RR)}
\nc{\RR}{{\Bbb R}} \nc{\HH}{{\Bbb H}} \nc{\CC}{{\Bbb C}} \nc{\ZZ}{{\Bbb Z}}
\nc{\FF}{{\Bbb F}} \nc{\NN}{{\Bbb N}} \nc{\QQ}{{\Bbb Q}} \nc{\PP}{{\Bbb P}}
\nc{\euler}{{\rm e}}
\nc{\vs}{\vspace{.2cm}} \nc{\vsp}{\vspace{1cm}}
\nc{\ip}{\langle \cdot , \cdot \rangle}
\nc{\ipp}{(      \cdot , \cdot       )}
\nc{\la}{\langle} \nc{\ra}{\rangle}
\nc{\ortsum}{ \mbox{\tiny $\displaystyle \bigoplus^{\perp}$}}
\nc{\unm}{\tfrac{1}{2}}\nc{\unc}{\tfrac{1}{4}} \nc{\und}{\tfrac{1}{16}}
\nc{\no}{\vs\noindent}
\nc{\lamn}{\Lambda^2(\RR^n)^*\otimes\RR^n}        \nc{\lamnc}{\Lambda^2(\CC^n)^*\otimes\CC^n}
\nc{\lamp}{\Lambda^2\pg^*\otimes\pg}
\nc{\lamg}{\Lambda^2\ggo^*\otimes\ggo}            \nc{\lamngo}{\Lambda^2\ngo^*\otimes\ngo}
\nc{\lamnk}{\Lambda^2(\fk^n)^*\otimes \fk^n}      \nc{\lamnkt}{\Lambda^3(\fk^n)^*\otimes \fk^n}
\nc{\tangz}{{\rm T}^{\rm Zar}}
\nc{\mum}{/\!\!/} \nc{\kir}{/\!\!/\!\!/}
\nc{\lievark}{\mathfrak{L}_n(\fk)}         \nc{\lievarc}{\mathfrak{L}_n(\CC)}        \nc{\lievarr}{\mathfrak{L}_n(\RR)}
\nc{\solvvark}{\mathfrak{R}_n(\fk)}        \nc{\solvvarc}{\mathfrak{R}_n(\CC)}       \nc{\solvvarr}{\mathfrak{R}_n(\RR)}
\nc{\nilvark}{\mathfrak{N}_n(\fk)}         \nc{\nilvarc}{\mathfrak{N}_n(\CC)}        \nc{\nilvarr}{\mathfrak{N}_n(\RR)}
\nc{\cirre}{\textrm{C}}
\nc{\fk}{\mathrm{k}}
\nc{\Ri}{\tfrac{4\Ric_{\mu}}{||\mu||^2}}
\nc{\ds}{\displaystyle}
\nc{\ben}{\begin{enumerate}} \nc{\een}{\end{enumerate}}
\nc{\f}{\frac}
\nc{\lb}{[\cdot,\cdot]}
\nc{\isn}{\tfrac{1}{||v||^2}}
\nc{\gkp}{(\ggo=\kg\oplus\pg,\ip)} \nc{\ukh}{(\ug=\kg\oplus\hg,\ip)}
\nc{\Hess}{\operatorname{Hess}}
\nc{\diag}{\operatorname{Diag}}
\nc{\ad}{\operatorname{ad}}       \nc{\Ad}{\operatorname{Ad}}        
\nc{\rank}{\operatorname{rank}}   \nc{\codim}{\operatorname{codim}}  
\nc{\Irr}{\operatorname{Irr}}     \nc{\End}{\operatorname{End}}
\nc{\Aut}{\operatorname{Aut}}     \nc{\Inn}{\operatorname{Inn}}
\nc{\lRad}{\operatorname{Rad}}
\nc{\Der}{\operatorname{Der}}     \nc{\Ker}{\operatorname{Ker}}      \nc{\spanv}{\operatorname{span}}
\nc{\Iso}{\operatorname{I}}       \nc{\Diff}{\operatorname{Diff}}
\nc{\Lie}{\operatorname{Lie}}     \nc{\tr}{\operatorname{tr}}
\nc{\dif}{\operatorname{d}}
\nc{\sen}{\operatorname{sen}}     \nc{\tang}{\operatorname{T}}
\nc{\modu}{\operatorname{mod}}
\nc{\Riem}{\operatorname{Rm}}     \nc{\Ric}{\operatorname{Ric}}
\nc{\sym}{\operatorname{sym}}     \nc{\symac}{\operatorname{sym^{ac}}}   \nc{\symc}{\operatorname{sym^{c}}}
\nc{\scalar}{\operatorname{sc}}
\nc{\grad}{\operatorname{grad}}
\nc{\ricci}{\operatorname{ric}}   \nc{\nr}{\operatorname{nr}}            \nc{\riccic}{\operatorname{ric^{c}}}
\nc{\riccig}{\operatorname{ric^{\gamma}}}
\nc{\Rin}{\operatorname{M}}
\nc{\Le}{\operatorname{L}}
\nc{\level}{\operatorname{level}} \nc{\rad}{\operatorname{r}}
\nc{\abel}{\operatorname{ab}} \nc{\CH}{\operatorname{CH}}
\nc{\mcc}{\operatorname{mcc}} \nc{\Adj}{\operatorname{Adj}}
\nc{\Order}{\operatorname{O}} \nc{\Ricg}{\operatorname{Ric^{\gamma}}}
\nc{\Hom}{\operatorname{Hom}}
\nc{\sign}{\operatorname{sign}}
\nc{\rhov}{\operatorname{\rho_{v}}}
\nc{\mm}{m}
\nc{\mmt}{\widetilde{m}}
\nc{\F}{\operatorname{F}}
\theoremstyle{plain}
\newtheorem{theorem}{Theorem}[section]
\newtheorem{corollary}[theorem]{Corollary}
\theoremstyle{definition}
\newtheorem{definition}[theorem]{Definition}
\theoremstyle{remark}
\newtheorem{example}[theorem]{Example}
\numberwithin{equation}{section}
\begin{document}

\title{Classification of 7-dimensional Einstein Nilradicals}

\author{EDISON ALBERTO FERN\'ANDEZ CULMA}

\address{Current affiliation: CIEM, FaMAF, Universidad Nacional de C\'ordoba, \newline \indent Ciudad Universitaria, \newline \indent (5000) C\'ordoba, \newline \indent Argentina}

\email{efernandez@famaf.unc.edu.ar}

\thanks{Fully supported by a CONICET fellowship (Argentina)}

\subjclass[2000]{Primary 53C25; Secondary 53C30, 22E25.}

\keywords{Einstein manifolds, Einstein Nilradical, Nilsolitons, \newline \indent \indent Geometric Invariant Theory, Nilpotent Lie Algebras}

\begin{abstract}
The problem of classifying Einstein solvmanifolds, or equivalently, Ricci soliton nilmanifolds, is known to be equivalent to a question on the variety $\mathfrak{N}_n(\CC)$ of $n$-dimensional complex nilpotent Lie algebra laws. Namely, one has to determine which $\mathrm{GL}_n(\CC)$-orbits in $\mathfrak{N}_n(\CC)$ have a critical point of the squared norm of the moment map. In this paper, we give a classification result of
such distinguished orbits for $n = 7$.  The set $\mathfrak{N}_7(\CC) / \mathrm{GL}_7(\CC)$ is formed by $148$ nilpotent Lie algebras and $6$ one-parameter families of pairwise non-isomorphic nilpotent Lie algebras. We have applied to each Lie algebra one of three main techniques to decide whether it has a distinguished orbit or not.
\end{abstract}

\maketitle

\section{Introduction}

From general theory of relativity, a Riemannian manifold $(M, g)$ is said to be {\it Einstein} if its Ricci tensor complies the Einstein condition $\ricci=c g$ for some constant $c\in \RR$ and in such case, $g$ is called an {\it Einstein metric}. If $M$ is compact, Einstein metrics are identified as critical points of the total scalar curvature functional on the space of all metrics on $M$ of a fixed volume, and can therefore be considered as \textquotedblleft{selected elements}\textquotedblright.

At the present time, no general existence results for Einstein metrics are known. Even in the homogeneous case, $M=G/H$, the search for $G$-invariant Einstein metrics ($G$-invariance turns the problem into an \textquotedblleft{algebraic}\textquotedblright one) is a difficult problem. We are interested in homogeneous Einstein metrics of negative scalar curvature (noncompact, nonflat). A big challenge in this case is known as the {\it Alekseevski\v{\i} conjecture} (\cite[Conjecture 7.57]{BESSE1}), which essentially states that any homogeneous Einstein manifold with negative scalar curvature is isometric to a simply connected solvable Lie group endowed with a left-invariant metric satisfying the Einstein condition (commonly so-called \textit{Einstein solvmanifolds}).

In \cite{HEBER1}, Heber gives a detailed analysis of \textit{standard Einstein solvmanifolds}, this means that the corresponding metric solvable Lie algebra $(\sg, \ip)$ has orthogonal decomposition $\sg=\ngo \, \ortsum \, \ag$ with $\ngo={[\sg,\sg]}$ and ${[\ag,\ag]}=0$. Heber's paper is accepted by many researchers as a pioneer work and one of his results allows Einstein solvmanifolds to be studied by using geometric invariant theory (GIT). Since then, geometric invariant theory has played an important role in understanding Einstein solvmanifolds and recent advances have come by this powerful machinery. A sample of this is the next result due to Lauret, in which the Kirwan's stratification of the variety of nilpotent Lie algebra laws is in the core of the proof.

\begin{theorem}\label{LauretStandard}\cite[Theorem 3.1]{LAURET4}
Any Einstein solvmanifold is standard.
\end{theorem}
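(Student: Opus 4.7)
The plan is to analyze the Einstein solvmanifold through the algebraic geometry of the variety $\nilvarc$ of nilpotent Lie brackets, following Heber's framework together with Kirwan's stratification. First I would set up the basic data: let $(\sg, \ip)$ be the metric solvable Lie algebra of the Einstein solvmanifold, let $\ngo$ be its nilradical, put $\ag := \ngo^{\perp}$, so that $\sg = \ngo \oplus^{\perp} \ag$ as vector spaces. The bracket of $\sg$ then decomposes into its $\ngo$-valued part $\mu \in \lamngo$, the adjoint action $\ad|_\ag \colon \ag \to \End(\ngo)$, and the (possibly nonzero) component $[\ag,\ag] \subseteq \ngo$; standardness is precisely the statement that this last component vanishes. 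Let $H \in \ag$ be the mean curvature vector, defined by $\la H, X \ra = \tr(\ad X)$ for all $X \in \sg$.

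Next, using Heber's formulas I would write the Ricci operator $\Ric$ of $(\sg, \ip)$ as a sum of three contributions on $\ngo$: the intrinsic term $\Ric_\mu$ coming from the nilpotent bracket, a symmetric piece produced by the adjoint action of $\ag$, and a correction tensor $\Psi$ that detects $[\ag,\ag]$. Imposing $\Ric = c \cdot I$ gives a coupled system in $\mu$, $\ad|_\ag$, $H$, and $\Psi$, and the objective is to show $\Psi = 0$.

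Now the GIT input enters. I view $\mu \in \nilvarc \subseteq \lamnc$ and consider the moment map $\mm$ for the $\Glc$-action on $\lamnc$; a standard computation identifies $\mm(\mu)$ with a constant multiple of an operator built from $\Ric_\mu$, so $\mm$ captures exactly the ingredient appearing in the $\ngo$-block of $\Ric$. By Kirwan's stratification, $\mu$ lies in a unique stratum $\sca_\beta \subseteq \nilvarc$ whose label $\beta$ is a symmetric operator on $\ngo$, characterized as the closest point to the origin in a certain convex set attached to $\mu$; it satisfies the sharp inequality $\tr(\beta X) \geq \|\beta\|^2$ for every $X$ in that set. The crucial structural step is to identify $\ad H|_\ngo$ with $\beta$ up to a positive scalar, exploiting Heber's theorem on the rationality of the eigenvalue type for Einstein solvmanifolds; this would place $\mu$ at a minimum of $\|\mm\|^2$ on its $\Glc$-orbit and turn $\ngo$ into a nilsoliton.

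The main obstacle, and the heart of the argument, is to run this Kirwan inequality against the Einstein equations so as to force $\Psi = 0$. Concretely, I would compute $\tr(\beta \cdot \Ric|_\ngo)$ in two different ways: by the Einstein equation it equals $c \cdot \tr(\beta)$, while by Heber's decomposition it becomes a sum involving $\tr(\beta \cdot \Ric_\mu)$ (bounded below by the stratum inequality), a manifestly nonnegative term from the symmetric part of $\ad|_\ag$, and a contribution containing $\Psi$. The challenge is to show that the $\Psi$-term is squeezed between two equal bounds, which forces every intermediate inequality to be an equality; the rigidity should come from the minimality of $\beta$ together with the rationality of the eigenvalue type of $\ad H|_\ngo$. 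Once this is achieved, $\Psi = 0$ and hence $[\ag, \ag] = 0$, so $(\sg, \ip)$ is standard.
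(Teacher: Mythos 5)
The paper does not prove this theorem at all: it is quoted verbatim from Lauret's article \cite[Theorem 3.1]{LAURET4}, and the only hint given here is the remark that Kirwan's stratification of $\nilvarc$ is ``in the core of the proof.'' So your proposal can only be measured against Lauret's actual argument, whose broad strategy (stratify the variety of nilpotent brackets, extract the stratum label $\beta$ of the nilradical bracket $\mu$, and play the resulting trace inequalities against the Einstein equation) your outline does correctly anticipate.

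That said, the outline has genuine gaps at exactly the points where the real work lies. First, your ``crucial structural step'' of identifying $\ad H|_{\ngo}$ with $\beta$ by ``exploiting Heber's theorem on the rationality of the eigenvalue type for Einstein solvmanifolds'' is circular: Heber's rationality result is established for \emph{standard} Einstein solvmanifolds, which is precisely what is to be proved; moreover, the proportionality of $\ad H|_{\ngo}$ to (a shift of) $\beta$ and the nilsoliton property of $\ngo$ are \emph{consequences} of the theorem, not inputs one may assume. Second, the GIT ingredient you invoke --- the inequality $\tr(\beta X)\geq\|\beta\|^2$ for $X$ in a convex set attached to $\mu$ --- is not the one that drives the proof; the key lemma is that for $\mu$ in the stratum $\sca_{\beta}$ one has $\tr\bigl((\beta+\|\beta\|^2 I)D\bigr)\geq 0$ for \emph{every} derivation $D$ of $\mu$, and establishing this (together with controlling the equality case) is a substantial piece of work that your sketch does not supply. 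Third, the final ``squeeze'' forcing $\Psi=0$ is asserted rather than carried out: the Einstein equation does not hand you a two-sided bound on the $\Psi$-term for free; one needs a carefully chosen test operator and a chain of estimates each of which must be shown to be an equality, and this is where most of Lauret's paper is spent. As it stands, your text is a plausible road map to the known proof, not a proof.
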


It is well known that any Einstein solvmanifolds is determined by the nilradical of its Lie algebra, which is so-called an \textit{Einstein nilradical}. In \cite{NIKOLAYEVSKY2}, Nikolayevsky gives some structural results on Einstein nilradicals, as well as criteria to decide if a nilpotent Lie algebra is Einstein nilradical or not.

There are many equivalent conditions to be an Einstein nilradical, some of them related with the Ricci flow. Let $N$ be the simply connected nilpotent Lie group with Lie algebra $\ngo$. Then $(S,\ip)$ is Einstein if and only if $(N,\ip|_{\ngo})$ is a {\it Ricci soliton} (called nilsolitons in the literature, cf. \cite{LAURET3}), i.e. the Ricci flow solution starting at $(N,\ip|_{\ngo})$ only evolves by pullback of diffeomorphisms and scaling. Furthermore, the only known examples of nontrivial homogeneous Ricci solitons are all solvmanifolds, which can be constructed as a semidirect product of an Einstein nilradical and a suitable torus of automorphisms (see \cite{LAURET5}).

Our main result in this paper is a complete classification of $7$-dimensional nilpotent Lie algebras which are Einstein nilradicals. Since any nilpotent Lie algebra of dimension less or equal than $6$ is an Einstein nilradical (\cite{LAURET2}, \cite{WILL1}) and a direct sum of Einstein nilradicals is again an Einstein nilradical, we focus on studying indecomposable algebras. By \cite[Theorem 6]{NIKOLAYEVSKY2}, it is enough to consider complex nilpotent Lie algebras and this is why we use the classification in dimension $7$ given in \cite{CARLES1} (with a couple of corrections by Magnin in \cite{MAGNIN1}), which consists of a list of $117$ non-isomorphic indecomposable algebras and $6$ one-parameter families of pairwise non-isomorphic nilpotent Lie algebras. If a nilpotent Lie algebra is written in a \textit{nice basis} (see Definition \ref{nicebasis}), then by using \cite[Theorem 3]{NIKOLAYEVSKY2}\label{nikonicebasis} it is a simple matter to prove whether the algebra is Einstein nilradical or not.  In \cite{MAGNIN1}, $20$ algebras and a one-parameter family of $\rank$ $\geq 1$ are not written in a nice basis, so that these are studied by other methods. Namely,
\begin{enumerate}[\hspace*{0cm} ${\cdot}$]
   \item by exhibiting a nilsoliton inner product (see Corollary \ref{beta}). \\
   \item by studying the closedness of the orbit of a nilpotent Lie algebra law by the action of certain reductive algebraic subgroup of $\Glr$ on $\lamn$ (cf. \cite[Theorem 5]{NIKOLAYEVSKY2}).
\end{enumerate}
These three main techniques have been applied to each $7$-dimensional algebra in \cite{MAGNIN1} of rank $\geq 1$ and the full classification of $7$-dimensional Einstein nilradicals obtained is given in Tables 1,2,3 and 4, according to their rank. In Section 3, many examples are given in order to illustrate the application of the techniques, the rest of the algebras can be each worked out in a completely analogous and straightforward way. We refer to \cite{FERNANDEZ-CULMA2} for more information on each algebra, including computations, solutions to equations and isomorphisms that we have used to obtain the complete classification (see also \cite{FERNANDEZ-CULMA1}).  It is important to point out that the present paper contains all the required arguments to get the classification theorem.

The first $\rank$-zero nilpotent Lie algebras (also known as characteristically nilpotent Lie algebras) appear in dimension $7$, which can not be Einstein nilradicals since they do not admit an $\NN$-gradation. This family has $7$ nilpotent Lie algebras and a one-parameter family. For $\rank$ $\geq 1$, we get $82$ indecomposable Einstein nilradicals (from a total of $110$ indecomposable algebras) and $5$ one-parameter families of Einstein nilradicals (with the exception of at most $2$ points in each one-parameter family).

After finishing the present paper, we have become aware that Payne and Kadio\u{g}lu have introduced in \cite{PAYNE1} a computational method for classifying Einstein nilradicals in the family of nilpotent Lie algebras with simple pre-Einstein derivation (i.e. all eigenvalues are different) and nonsingular Gram matrix (a simple derivation implies the existence of a nice basis which makes sense to speak of the Gram matrix). Their method does not rely on any preexisting classifications of nilpotent Lie algebras. They classify Einstein nilradicals within this family, which has $33$ algebras in dimensions $7$ and $159$ algebras in dimension $8$. With the notation in \cite{MAGNIN1}, the $7$-dimensional indecomposable algebras that have been studied in \cite{PAYNE1} are: $1.8$, $1.9$, $2.42$ and $2.29$ in \cite[Table I]{PAYNE1}, and $1.6$, $2.3$, $1.7$, $1.10$, $2.32$, $2.7$, $2.28$, $3.15$, $3.6$, $2.39$, $2.22$, $2.20$, $2.16$, $2.45$, $3.21$, $2.21$, $2.40$, $2.36$, $2.33$, $2.43$, $2.29$, $2.44$, $2.34$, $2.18$ in \cite[Table III]{PAYNE1} respectively.

It is known that the variety of $n$-dimensional complex nilpotent Lie algebra laws, $\nilvarc$, is reducible for any $n\geq 7$ (cf. \cite{KHAKIMDJANOV1}). For the natural action of $\Glc$ on $\lamnc$, by applying Kirwan's stratification and convexity properties of the moment map, it follows that each irreducible component $\mathcal{C}_i$ of $\nilvarc$ can be obtained as the closure of its unique stratum of minimum norm (cf. \cite{FERNANDEZ-CULMA2}). There are finitely many strata and are parameterized by the moment map images of the critical points of the squared norm of the moment map that belong to $\nilvarc$ (which are precisely Einstein nilradicals). It follows from the classification of $7$-dimensional Einstein nilradicals obtained in the present paper that the two irreducible components of $\mathfrak{N}_7(\CC)$ are respectively the closures of the strata corresponding to the Einstein nilradicals of eigenvalue type $(1<2<3<4<5; 2,1,2,1,1)$ (Table 1, algebra $17$) and $(1<2<3<4<5<6<7;1,...,1)$ (Table 1, algebras $1(i_\lambda)$ with $\lambda\neq 0,1$ and $1(iii)$, and Table 2, algebras $5$ and $14$) (see definition \ref{einNIL}). We note that the stratum associated to the first type above has actually minimal norm over all strata of $\mathfrak{N}_7(\CC)$, although a third stratum has smaller norm than the second one, the strata attached to the type $(16<21<37<48<53<69<90;1,...,1)$ (Table 2, algebra $13$). A characterization of the strata of minimum norm in the irreducible components of $\mathfrak{N}_n(\CC)$ for $n$ large, may be very useful in the study of many problems on the variety of Lie algebra laws, as for instance rigidity of nilpotent Lie algebras (a conjecture attributed to Mich{\`e}le Vergne says that for $n$ sufficiently large, there do not exist nilpotent Lie algebras which have Zariski-open orbit in $\nilvarc$).

\section{Preliminaries}

In this section, we overview all the results on Einstein nilradicals we need to apply in our classification. We refer to the survey \cite{LAURET3} for a more detailed treatment.

It follows from Theorem \ref{LauretStandard} and Heber's Rank-One reduction (\cite[Theorem 4.18]{HEBER1}) that any Einstein solvmanifold with Einstein metric solvable Lie algebra $(\sg=\ngo\oplus\ag , \ip)$ is determined by the Einstein metric solvable Lie algebra $(\ngo \oplus \ag_{1}, \la\la \cdot , \cdot \ra\ra)$ where $\la\la \cdot , \cdot \ra\ra$ denotes the restriction of the inner product on $\sg$ and $\ag_{1}$ is a special one-dimensional subspace of $\ag$. Since $\ag_{1}$ and $\la\la \cdot , \cdot \ra\ra$ only depend on the nilpotent Lie algebra $\ngo$ (see \cite[Lemma 2]{LAURET1}), the classification of Einstein solvmanifolds is actually a problem on nilpotent Lie algebras.

\begin{definition}\cite[Definition 1]{NIKOLAYEVSKY2}\label{einNIL}
A nilpotent Lie algebra which is the nilradical of an Einstein metric solvable Lie algebra is called an {\it Einstein nilradical}. A vector $X$ spanning $\ag_{1}$ and scaled in such a way that $||X||^2=\tr(\ad(X))$ is called the \textit{mean curvature vector}, and the restriction of the derivation $\ad(X)$ to $\ngo$ the \textit{Einstein derivation}. The ordered set of eigenvalues $d_i \in \NN$ of the (appropriately scaled) Einstein derivation, together with the multiplicities $n_i$ is called the \textit{eigenvalue type} of an Einstein solvmanifold,  which is written as $(d_1 <\ldots  < d_r; n_1,\ldots , n_r)$ (see \cite{HEBER1})
\end{definition}

Let $V$ be the vector space $\lamn$. If $\{e_1^{'},...,e_n^{'}\}$ is the basis of $(\RR^n)^*$ dual to the canonical basis $\{e_1,...,e_n\}$ of $\RR^n$ then $\mu_{ij}^k=(e_i^{'}\wedge e_j^{'})\otimes e_k$ with $1\leq i < j \leq n$, $1\leq k \leq n$ is the canonical basis of $V$. As we mentioned above, GIT is a very important tool in the theory of Einstein solvmanifolds and the connection is the moment map for the natural action of $\Glr$ on $V$, $\mmt: V \smallsetminus\{ 0\} \longrightarrow \sym(n)$. Let $\mu \in V$ be a nilpotent Lie algebra law and let $\Ric_{\mu}$ denote the Ricci operator of the nilmanifold $(N_{\mu},\ip)$, where $N_{\mu}$ is the simply connected Lie group with $\Lie(N_{\mu})=(\RR^n,\mu)$ and $\ip$ is the canonical inner product of $\RR^n$. For those who are familiar with the works of Ness (\cite{NESS1}), Kirwan (\cite{KIRWAN1}) or Marian (\cite{MARIAN1}), it will be enough to know that
\begin{equation}\label{momentmapricci}
\mmt(\mu)=\frac{4}{||\mu||^2}\Ric_{\mu},
\end{equation}

\noindent to recognize some of the results about existence and uniqueness of Einstein solvmanifolds (Einstein nilradical).

Let $\mm$ be the unnormalized moment map, given by $\mm(\mu)=4\Ric_{\mu}$. By analogy with GIT, we work with the moment map $\mm$ instead of $\Ric$. Computations in \cite{FERNANDEZ-CULMA2, FERNANDEZ-CULMA1} are made using $\mm$.

The next theorem gives a characterization of Einstein nilradicals.

\begin{theorem}\cite[Theorem 4.2.]{LAURET3}\label{characterization}
Let $\mu \in V$ be a nilpotent Lie algebra law. The nilpotent Lie algebra $(\RR^n,\mu)$ is an Einstein nilradical if and only if there exists $\widetilde{\mu} \in \Glr\cdot\mu$ such that $\Ric_{\widetilde{\mu}} \in \RR I \oplus \Der(\widetilde{\mu})$, or equivalently, $\mm ( \widetilde{\mu} ) = c_{ \widetilde{\mu} }I + \phi$ for some $c_{ \widetilde{\mu} } < 0$ and $\phi$ a symmetric derivation of $\widetilde{\mu}$.
\end{theorem}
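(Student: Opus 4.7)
The plan is to recast the theorem as the equivalence between (a) $(\RR^n,\mu)$ being the nilradical of some Einstein rank-one solvable extension $\sg=\ngo\rtimes\RR\phi_0$, and (b) the existence of a nilsoliton inner product on $(\RR^n,\mu)$, meaning an inner product whose Ricci operator has the form $cI+\phi$ with $\phi$ a symmetric derivation and $c<0$. Passing to the canonical inner product via an $\la\la\cdot,\cdot\ra\ra$-orthonormal frame (encoded by $g\in\Glr$) turns a nilsoliton inner product on $\mu$ into an element $\widetilde{\mu}=g\cdot\mu\in\Glr\cdot\mu$ with $\Ric_{\widetilde{\mu}}\in\RR I\oplus\Der(\widetilde{\mu})$, since Ricci transforms equivariantly under the $\Glr$-action. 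So the statement of the theorem reduces to the equivalence of (a) with the existence of such $\widetilde{\mu}$.

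For the forward direction, Theorem~\ref{LauretStandard} together with Heber's rank-one reduction lets us assume the ambient Einstein solvmanifold is of the form $\sg=\ngo\rtimes\RR\phi_0$ with $\phi_0\perp\ngo$, $\|\phi_0\|=1$, and $D=\ad\phi_0|_\ngo$ a symmetric derivation. Standard Ricci formulas for rank-one solvable extensions give
\begin{equation*}
\Ric_\sg|_\ngo=\Ric_\ngo-(\tr D)D,\quad \Ric_\sg(\phi_0,\phi_0)=-\tr(D^2),\quad \Ric_\sg(\phi_0,\ngo)=0,
\end{equation*}
and the Einstein condition $\Ric_\sg=cI$ then forces $\Ric_\ngo=cI+(\tr D)D$ with $\phi:=(\tr D)D$ a symmetric derivation and $c=-\tr(D^2)<0$. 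Conversely, given $\widetilde{\mu}$ with $\Ric_{\widetilde{\mu}}=cI+\phi$ as above, one rescales $\phi$ to $D$ satisfying $(\tr D)D=\phi$ (legitimate by the standard positivity $\tr\phi>0$ for nilsoliton derivations), forms the extension $\sg=(\RR^n,\widetilde{\mu})\rtimes\RR\phi_0$ with $\phi_0$ acting as $D$, $\phi_0\perp\ngo$, and $\|\phi_0\|=1$, and checks via the same three identities that $\Ric_\sg=cI$. The nilradical of $\sg$ is then $(\RR^n,\widetilde{\mu})\cong(\RR^n,\mu)$, completing the argument.

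The main technical step is the verification of the three Ricci identities displayed above, which is a direct Koszul-formula computation exploiting the symmetry of $D$, the invariance $\ad\phi_0(\ngo)\subset\ngo$, and the fact that the bracket induced on $\ngo$ by $\sg$ agrees with $\widetilde{\mu}$. Once these formulas are in hand, both directions collapse to elementary algebra, the equivalence following by matching the scalars $c$ and $-\tr(D^2)$ and the derivations $\phi$ and $(\tr D)D$ via the normalization of $\phi_0$. Translating freely between $\Ric$ and $\mm=4\Ric$ yields the equivalent formulation $\mm(\widetilde{\mu})=c_{\widetilde{\mu}}I+\phi$ stated in the theorem.
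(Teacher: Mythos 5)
The paper does not prove this statement at all: it is quoted verbatim as \cite[Theorem 4.2.]{LAURET3}, so there is no internal proof to compare against, and what you have written is essentially the standard argument from the cited literature (Heber's rank-one reduction plus the Ricci formulas for a rank-one solvable extension). Your outline is correct: the dictionary between inner products on $(\RR^n,\mu)$ and points of $\Glr\cdot\mu$ with the canonical inner product is the right reduction, the three Ricci identities for $\sg=\ngo\rtimes\RR\phi_0$ are the standard ones, and the forward and backward directions do collapse to the algebra you describe.

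Two steps are asserted rather than proved, and both deserve to be made explicit because they rest on the same identity. First, the positivity $\tr\phi>0$ needed to solve $(\tr D)D=\phi$ with $D=t\phi$, $t>0$. Second, and more importantly, the matching $-\tr(D^2)=c$ in the converse: this is \emph{not} something you can arrange ``via the normalization of $\phi_0$'' --- once you have chosen $D$ so that $(\tr D)D=\phi$ (which fixes $D$ up to sign), the value $-\tr(D^2)$ is determined, and you must check it equals $c$, otherwise $\Ric_\sg$ is not proportional to the identity in the $\phi_0$-direction. Both facts follow from the trace identity $\tr(\Ric_{\widetilde{\mu}}\,\psi)=0$ for every symmetric derivation $\psi$ of $\widetilde{\mu}$ (this is \cite[Equation (4.4)]{LAURET3}, which the paper itself invokes in the proof of Corollary \ref{beta}): applied with $\psi=\phi$ it gives $c\tr\phi+\tr(\phi^2)=0$, whence $\tr\phi=-\tr(\phi^2)/c>0$ and $\tr(\phi^2)/\tr\phi=-c$, which is exactly $\tr(D^2)=-c$ for your normalized $D$. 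You should also note in the converse that $\ngo$ really is the nilradical (not merely a nilpotent ideal) of the constructed $\sg$; this follows because $\tr D=(\tr\phi)^{1/2}>0$, so no element with a nonzero $\phi_0$-component can lie in a nilpotent ideal. With those points supplied, the argument is complete.
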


The derivation $\phi$ in Theorem \ref{characterization} is, up to conjugation, a positive multiple of the Einstein derivation. Recall that the eigenvalues of the Einstein derivation are positive integer and define the eigenvalue type (see \cite{LAURET3}).

\begin{corollary}\label{beta}
Let $\mu \in V$ be a nilpotent Lie algebra law. The nilpotent Lie algebra $(\RR^n,\mu)$ is an Einstein nilradical with eigenvalue type $(d_1 < ... < d_r ; n_1,..,n_r)$ if and only if there exists $\widetilde{\mu} \in \Glr \cdot \mu$ such that:
\begin{equation}\label{betaequ}
\mm ( \widetilde{\mu} ) = \tfrac{\sum n_i d_i}{n\sum n_i d_i^2 - \left(\sum n_i d_i\right)^2} \left( - \tfrac{\sum n_i d_i^2 }{\sum n_i d_i}\small{\mathrm{I}} +
{\diag ({\underbrace{d_1}_{n_1\,times}},...,\underbrace{d_r}_{{n_r\,times}})}\right)
\end{equation}

\noindent where $d_i$, $i=1,...,r$ are positive integers without a common divisor and $\diag (d_1,...,d_r)$ is a derivation of $\widetilde{\mu}$.
\end{corollary}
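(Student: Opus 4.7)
The plan is to derive the explicit formula \eqref{betaequ} from Theorem \ref{characterization} by choosing a convenient orbit representative and then determining the two unknown scalars from two linear constraints.

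First I would apply Theorem \ref{characterization} to obtain $\widetilde{\mu}_0\in\Glr\cdot\mu$ with $\mm(\widetilde{\mu}_0)=c_0 I+\phi$, $c_0<0$, $\phi$ a symmetric derivation. By the observation following the theorem, $\phi$ is a positive multiple of a conjugate of the Einstein derivation, whose spectrum is encoded by the eigenvalue type $(d_1<\cdots<d_r;n_1,\ldots,n_r)$. Using the equivariance $\mm(k\cdot\widetilde{\mu}_0)=k\,\mm(\widetilde{\mu}_0)\,k^{-1}$ for $k\in\Or$, I can conjugate so that $\phi=\lambda D$ with $D=\diag(d_1,\ldots,d_1,\ldots,d_r,\ldots,d_r)$ (each $d_i$ repeated $n_i$ times) and $\lambda>0$, staying inside $\Glr\cdot\mu$. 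Finally, since the scalar action of $tI\in\Glr$ sends $\widetilde{\mu}$ to $t^{-1}\widetilde{\mu}$ and both $\mm$ and $||\cdot||^2$ are homogeneous of the same degree in $\mu$, I rescale to $||\widetilde{\mu}||^2=1$ while keeping the form $\mm(\widetilde{\mu})=cI+\lambda D$ with $c<0$, $\lambda>0$.

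Two linear equations then pin down $c$ and $\lambda$. The trace identity $\tr\mm(\widetilde{\mu})=-||\widetilde{\mu}||^2=-1$ —which follows from the standard nilmanifold scalar curvature formula combined with $\mm=4\Ric$, or equivalently from \eqref{momentmapricci} applied to $X=I$ (using that the infinitesimal action satisfies $\pi(I)\widetilde{\mu}=-\widetilde{\mu}$)—yields
$$nc+\lambda\tr(D)=-1.$$
The orthogonality $\tr(\mm(\widetilde{\mu})\cdot D)=0$ comes from the moment map: since $D$ is a symmetric derivation, $\pi(D)\widetilde{\mu}=0$, and the defining property of $\mmt$ together with \eqref{momentmapricci} gives $\tr(\mm(\widetilde{\mu})\cdot D)=\la\pi(D)\widetilde{\mu},\widetilde{\mu}\ra=0$, i.e.
$$c\tr(D)+\lambda\tr(D^2)=0.$$
Writing $\tr(D)=\sum n_i d_i$ and $\tr(D^2)=\sum n_i d_i^2$, and noting that $n\tr(D^2)-\tr(D)^2>0$ by Cauchy--Schwarz (the $d_i$ are not all equal when $r\geq 2$), solving this $2\times 2$ system recovers exactly the coefficients in \eqref{betaequ}.

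The converse direction is immediate: any $\widetilde{\mu}$ satisfying \eqref{betaequ} has $\mm(\widetilde{\mu})=cI+\phi$ with $c<0$ and $\phi=\lambda D$ a symmetric derivation of the prescribed eigenvalue type, so Theorem \ref{characterization} yields that $(\RR^n,\mu)$ is an Einstein nilradical. I do not foresee a real obstacle; the only mildly non-obvious ingredient is the orthogonality $\tr(\mm\cdot D)=0$ for symmetric derivations $D$, and this is a direct consequence of the moment-map nature of $\mm$ encoded in \eqref{momentmapricci}.
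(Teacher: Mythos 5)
Your proposal is correct and follows essentially the same route as the paper: start from Theorem \ref{characterization}, use $\Or$-equivariance to diagonalize the derivation, and then determine the two scalars from the orthogonality of $\mm$ to symmetric derivations together with the trace identity $\tr\mm(\lambda)=-\|\lambda\|^2$ (both are \cite[Equation (4.4)]{LAURET3}), finishing with a rescaling. The only difference is organizational — you normalize first and solve a $2\times 2$ linear system, whereas the paper eliminates $a$ first and normalizes at the end — and the resulting coefficients agree with \eqref{betaequ}.
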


\begin{proof}
Let $\mu_0 \in \Glr \cdot \mu$ such that $\mm(\mu_0)=c_{\mu_{0}}I+\phi$. Since the matrix of $\mm(\mu_0)$ in the canonical basis of $\RR^n$ is symmetric, then there exists $k\in\Or$ such that $k\mm(\mu_0)k^{-1}=\diag(x_1,...,x_n)$ with $x_1\leq...\leq x_n$ and so $k\phi k^{-1} = a\diag(d_1,...,d_r)$, where $d_1<...<d_r$ are the eigenvalues of the Einstein derivation $\phi$ with multiplicities $n_1,...,n_r$. By using that the moment map is $\Or-$equivariant we have
\begin{equation}\label{st1}
  \mm(k\cdot\mu_0)=c_{\mu_0} I + a\diag(d_1,...,d_r).
\end{equation}
Now, since the moment map is orthogonal to each symmetric derivation (by \cite[Equation (4.4)]{LAURET3}), if we take the inner product of equation (\ref{st1}) with $\diag(d_1,...,d_r)$, then we get that
$$
  a= -c_{\mu_{0}}\frac{\sum n_i d_i}{\sum n_i d_i^2}
$$
and therefore, by replacing $a$ in (\ref{st1})
\begin{equation}\label{st2}
  \mm(k\cdot\mu_0)=c_{\mu_0}\left(I - \frac{\sum n_i d_i}{\sum n_i d_i^2} \diag(d_1,...,d_r)\right)
\end{equation}
We recall that, for any $\lambda \in V$, $\tr(\mm(\lambda))=-||\lambda||^2$ (again, by \cite[Equation 4.4.]{LAURET3} with $\alpha=I$), therefore, if we use this fact in (\ref{st2}), it follows that
$$
c_{\mu_0}=-||c_{\mu_0}||^2/\left({n-\frac{\left(\sum n_i d_i\right)^2}{\sum n_i d_i^2}}\right)
$$
the denominator above is not zero by the Cauchy–Schwarz inequality and $\phi \notin \RR I$. In consequence
$$
\frac{\mm(k\cdot\mu_0)}{||k\cdot\mu_0||^2}=\mm\left(\frac{k\cdot\mu_0}{||k\cdot\mu_0||}\right)= (\ref{betaequ})
$$
and, as $\frac{k\cdot\mu_0}{||k\cdot\mu_0||} \in \Glr\cdot\mu$, making $\widetilde{\mu} = \frac{k\cdot\mu_0}{||k\cdot\mu_0||}$ we have concluded the proof.
\end{proof}

The importance of expression (\ref{betaequ}) is that it only depends on the eigenvalue type. We use the pre-Einstein derivation and Corollary \ref{beta} to find Einstein nilradicals of a fixed eigenvalue type.

If $(\RR^n, \mu)$ is an Einstein nilradical with eigenvalue type $(d_1 <...< d_r ; n_1,...,n_r)$, then the value
\begin{equation}\label{betanorm}
\left(n-\frac{\left(\sum n_i d_i\right)^2 }{\sum  n_i d_i^2}\right)^{-1}
\end{equation}
is the minimum value on $\Glr \cdot \mu$ that the function
\begin{equation}\label{momentnorm}
\fna{\F}{V \smallsetminus\{ 0\}}{\RR}{v}{||\mmt(v)||^2}
\end{equation}
takes. $\F$ measures how far the Einstein derivation is from a multiple of the identity. Since there are finitely many eigenvalue types (or equivalently, there are finitely many \textit{strata}), then the expression (\ref{betanorm}) takes finitely many values which can be used to study degenerations in $\nilvarr$ via the stratification (\cite[Theorem 2.10.]{LAURET4}).

To finish, we review some definitions and results from \cite{NIKOLAYEVSKY2}.

\begin{definition}\cite[Definition 2]{NIKOLAYEVSKY2}
A derivation $\phi$ of a Lie algebra $\ggo$ is called \textit{pre-Einstein}, if it is semisimple, with all the eigenvalues real, and
\begin{equation}\label{preeinstein}
  \tr(\phi \psi) = \tr(\psi), \mbox{ for any } \psi\in \Der(\ggo)
\end{equation}
\end{definition}

\begin{theorem}\cite[Theorem 1]{NIKOLAYEVSKY2}\label{nikopreeinstein}
\begin{enumerate}[\hspace*{0cm} 1.]
\item
\begin{enumerate}[(a)]
    \item Any Lie algebra $\ggo$ admits a pre-Einstein derivation $\phi_{\ggo}$.
    \item The derivation $\phi_{\ggo}$ is determined uniquely up to automorphism of $\ggo$.
    \item All the eigenvalues of $\phi_{\ggo}$ are rational numbers.
  \end{enumerate}
\item Let $\ngo$ be a nilpotent Lie algebra, with $\phi$ a pre-Einstein derivation. If $\ngo$ is an Einstein nilradical, then its Einstein derivation is, up to conjugation by an automorphism, positively proportional to $\phi$ and
        \begin{equation}
          \phi > 0 \mbox{   and   } \ad_{\phi} \geq 0,
        \end{equation}
      i.e. all eigenvalues of $\phi$ are positive and all eigenvalues of $\ad_{\phi}$ are non negative.
\end{enumerate}
\end{theorem}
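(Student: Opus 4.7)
The plan is to exploit the fact that $\Der(\ggo)$ is an algebraic Lie algebra, being $\Lie(\Aut(\ggo))$ for the real algebraic group $\Aut(\ggo) \subset \Gl(\ggo)$. First, I would fix a maximal $\RR$-split torus $\tg \subset \Der(\ggo)$, i.e.\ a maximal abelian subalgebra whose elements are simultaneously $\RR$-diagonalizable on $\ggo$. On $\tg$ the trace form $(X,Y) \mapsto \tr(XY)$ is positive definite, hence non-degenerate, so there is a unique $\phi \in \tg$ satisfying $\tr(\phi\psi)=\tr(\psi)$ for every $\psi \in \tg$. This $\phi$ is the candidate pre-Einstein derivation. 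Statement (1c) is then essentially automatic: $\tg$ is cut out from $\Der(\ggo)$ by polynomial equations with rational coefficients, and the linear system defining $\phi$ inside $\tg$ is rational, so the eigenvalues of $\phi$ are rational.

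The crux of (1a) is promoting the trace identity from $\tg$ to all of $\Der(\ggo)$. Decompose $\Der(\ggo) = \bigoplus_\alpha \Der(\ggo)_\alpha$ into $\ad(\tg)$-weight spaces. For $\alpha \neq 0$, a standard weight argument on joint $\tg$-eigenspaces of $\ggo$ shows that $\tr(\psi_\alpha) = \tr(\phi\psi_\alpha) = 0$ for any $\psi_\alpha \in \Der(\ggo)_\alpha$. For $\psi$ in the centralizer $\Der(\ggo)_0$, I would invoke the Jordan decomposition $\psi = \psi_s + \psi_n$, which stays in $\Der(\ggo)$ thanks to algebraicity. The nilpotent part commutes with $\phi$, making $\phi\psi_n$ nilpotent so both traces vanish; the $\CC$-semisimple part further decomposes into an $\RR$-diagonalizable piece (which lies in $\tg$ by maximality, hence satisfies the identity by construction) and a purely imaginary piece whose eigenvalues come in conjugate pairs, so both $\tr(\phi\psi)$ and $\tr(\psi)$ vanish. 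For (1b), any other pre-Einstein derivation sits in some maximal $\RR$-split torus; since all such tori in an algebraic Lie algebra are conjugate by $\Aut(\ggo)$, we may move it into $\tg$, and non-degeneracy of the trace form forces uniqueness there.

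For part 2, let $(\ngo,\ip)$ be a nilsoliton realizing the Einstein nilradical. By Theorem \ref{characterization} we may assume $\Ric_\mu = c\,I + \phi_E$ with $c<0$ and $\phi_E \in \Der(\mu)$ symmetric. The key identity is $\tr(\Ric_\mu \cdot \psi) = 0$ for every $\psi \in \Der(\mu)$, coming from the vanishing of the $\Glr$-action of derivations on $\mu$ together with the orthogonality property \cite[Eq.~(4.4)]{LAURET3}. This yields $c\,\tr(\psi) + \tr(\phi_E \psi) = 0$, so $-c^{-1}\phi_E$ satisfies the pre-Einstein identity. Since $\phi_E$ is symmetric it is $\RR$-diagonalizable, hence $-c^{-1}\phi_E$ is pre-Einstein, and by (1b) it is conjugate to $\phi$ in $\Aut(\ngo)$. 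The positivity $\phi > 0$ and $\ad_\phi \geq 0$ then transfer through this conjugation from the classical fact, built into the eigenvalue type (Corollary \ref{beta}), that the Einstein derivation has positive integer eigenvalues.

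The main obstacle I anticipate is the extension of the trace identity from $\tg$ to all of $\Der(\ggo)$: the algebraicity of $\Der(\ggo)$ (to get Jordan decomposition inside the derivation algebra) and the maximality of $\tg$ (to absorb commuting $\RR$-semisimple elements of the centralizer) must both be used essentially, with extra care needed for the purely imaginary part of the $\CC$-semisimple component of a centralizing derivation.
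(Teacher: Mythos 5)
This theorem is quoted in the paper verbatim from \cite[Theorem 1]{NIKOLAYEVSKY2} with no proof supplied, so there is no in-paper argument to compare against; your proposal has to be measured against Nikolayevsky's original proof. On points (1a), (1b) and the proportionality claim in (2) your outline reconstructs that proof faithfully: a maximal abelian subalgebra $\tg$ of simultaneously $\RR$-diagonalizable derivations, the positive-definite trace form on $\tg$ producing the candidate $\phi$, extension of $\tr(\phi\psi)=\tr(\psi)$ from $\tg$ to all of $\Der(\ggo)$ via the $\ad(\tg)$-weight decomposition together with the Jordan decomposition (which uses algebraicity of $\Der(\ggo)=\Lie(\Aut(\ggo))$ exactly as you anticipate), conjugacy of maximal split tori for uniqueness, and the identity $\tr(\Ric_{\mu}\psi)=0$ for $\psi\in\Der(\mu)$ to show that $-c^{-1}\phi_E$ is pre-Einstein.

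Two steps are genuinely incomplete. First, the rationality argument (1c) does not work as stated: $\tg$ is not visibly ``cut out by polynomial equations with rational coefficients'' inside $\Der(\ggo)$ --- it depends on a choice and carries no a priori rational structure. The correct mechanism is to pass to a basis of $\ggo$ of weight vectors for $\tg$; in that basis the diagonal derivations form the solution set of the integer linear system $x_i+x_j=x_k$ (one equation for each nonvanishing structure constant $c_{ij}^k$, whose actual value is irrelevant), this rational subspace equals $\tg$ by maximality, and $\phi$ is then the unique solution of a linear system with rational data, hence has rational entries. Second, the assertion $\ad_{\phi}\geq 0$ in part 2 is not proved: it does not ``transfer from the eigenvalue type'', because the eigenvalue type only records the eigenvalues of $\phi$ on $\ngo$ and says nothing about the eigenvalues of $\ad_{\phi}$ acting on $\Der(\ngo)$. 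A separate argument is required: for the nilsoliton $\mm(\mu)=cI+D$ and $\psi\in\Der(\mu)$ lying in the $a$-eigenspace of $\ad_D$ (such eigencomponents are again derivations since $\Der(\mu)$ is $\ad_D$-invariant), the moment-map identity gives, up to a positive constant, $a\|\psi\|^2=\tr\left(\mm(\mu)[\psi,\psi^t]\right)=\|\pi(\psi^t)\mu\|^2\geq 0$, using $\tr([\psi,\psi^t])=0$ and $\pi(\psi)\mu=0$; this forces $\Der(\mu)$ to have no negative $\ad_D$-weights. Your treatment of $\phi>0$ by appeal to Heber's positivity of the Einstein derivation's eigenvalues is acceptable in context, and the remainder of the outline is sound.
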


There is a reductive real Lie group, $G_{\phi}$, attached to a pre-Einstein derivation. Consider
\begin{equation}\label{gpreeinstein}
\ggo_{\phi}:=\{\alpha \in \glg_n(\RR) : {[\alpha,\phi]}=0, \, \tr(\alpha \phi)=0, \, \tr(\alpha)=0 \}
\end{equation}
and let $G_{\phi}$ be the connected Lie subgroup of $\Glr$ with Lie algebra $\ggo_{\phi}$. The group $G_{\phi}$ is in fact the identity component of the real algebraic group ${\widetilde{G_{\phi}}}$ which is given by
$$
{\widetilde{G_{\phi}}}:=\{ \diag(g_1,\ldots,g_r) / g_i \in \mathrm{GL}_{n_i}(\RR),\, \prod \det(g_i)=\prod \det(g_i)^{d_i}=1\}
$$
where $\phi= a \diag(\underbrace{d_1}_{n_1\mbox{ times}}, \ldots,\underbrace{d_r}_{n_r\mbox{ times}})$ with $a$
been the least common denominator of the eigenvalues of the pre-Einstein derivation $\phi$.

The relevance of the group $G_{\phi}$ in the study of Einstein nilradicals is given by the next theorem.
\begin{theorem}\cite[Theorem 5]{NIKOLAYEVSKY2}\label{nikoclosedorbit}
Let $\mu$ be a nilpotent Lie algebra law. For the nilpotent Lie algebra $\ngo=(\RR^n,\mu)$ with a pre-Einstein derivation $\phi$, the following conditions are equivalent:
\begin{enumerate}[\hspace*{0cm} (i)]
  \item $\ngo$ is an Einstein nilradical.
  \item The orbit $G_{\phi} \cdot \mu$ is closed in $V$.
  \item The function $\rho_{\mu}:G_{\phi} \longrightarrow \RR$ defined by $\rho_{\mu}(g)=||g \cdot \mu||^2$ attains the minimum.
\end{enumerate}
Suppose the orbit $G_{\phi}\cdot\mu$ is not closed. Then there exists a unique closed orbit $G_{\phi}\cdot\mu_0 \subset \overline{G_{\phi}\cdot\mu}$ such that the algebra $\ngo_{0}=(\RR^n,\mu_0)$ is an Einstein nilradical not isomorphic to $\ngo$ and there exists a symmetric matrix $A\in \ggo_{\phi}$, with integer eigenvalues, such that $\lim \limits_{t\rightarrow \infty} \exp(tA)\cdot\mu = \mu_0$

\end{theorem}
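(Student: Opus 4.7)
The plan is to apply the real Kempf--Ness theorem to the linear action of $G_{\phi}$ on $V=\lamn$. The first step is to verify that $G_{\phi}$ is a real reductive algebraic group. Since $\phi$ is semisimple with real eigenvalues (Theorem \ref{nikopreeinstein}(1)), one may assume, after replacing $\mu$ by a suitable conjugate, that $\phi$ is symmetric; the Lie algebra $\ggo_{\phi}$ is then the intersection of the reductive centralizer $Z_{\glg_n(\RR)}(\phi)$ with the codimension-two subspace cut out by $\tr(\alpha)=\tr(\alpha\phi)=0$, which is stable under transpose, so $G_{\phi}$ is preserved by a Cartan involution of $\Glr$ and is therefore reductive.

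Next, I would establish (ii)$\Leftrightarrow$(iii) from the real Kempf--Ness theorem (Richardson--Slodowy): for a real reductive group acting linearly on a Euclidean space, an orbit is closed if and only if the norm-squared function attains its infimum on the orbit, if and only if the $G$-moment map vanishes somewhere on it. A direct calculation would identify the $G_{\phi}$-moment map as $\mm_{\phi}=p\circ\mm$, where $p:\sym(n)\to\ggo_{\phi}\cap\sym(n)$ is orthogonal projection with respect to the trace form. The orthogonal complement of $\ggo_{\phi}\cap\sym(n)$ inside $\sym(n)$ decomposes as $\RR I\oplus\RR\phi\oplus[\phi,\sog(n)]$, so the condition $\mm_{\phi}(\mu')=0$ is equivalent to $\mm(\mu')=cI+\lambda\phi+[\phi,Y]$ for some $c,\lambda\in\RR$ and $Y\in\sog(n)$.

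For (i)$\Leftrightarrow$(ii) the bridging fact is that every $g\in G_{\phi}$ commutes with $\phi$, so $\phi$ remains a derivation of every $\mu'\in G_{\phi}\cdot\mu$. For $(\Rightarrow)$, if $\ngo$ is an Einstein nilradical, Theorem \ref{characterization} provides $\widetilde\mu\in\Glr\cdot\mu$ with $\mm(\widetilde\mu)=cI+\psi$, $\psi\in\Der(\widetilde\mu)\cap\sym(n)$; by Theorem \ref{nikopreeinstein}(2) $\psi$ is a positive multiple of a pre-Einstein derivation of $\widetilde\mu$, and the uniqueness of the latter up to automorphism allows a further composition by an automorphism of $\mu$ so that $\widetilde\mu\in G_{\phi}\cdot\mu$ and $\psi=\lambda\phi$, yielding $\mm_{\phi}(\widetilde\mu)=0$. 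For $(\Leftarrow)$, if $\mm_{\phi}(\mu')=0$ for some $\mu'\in G_{\phi}\cdot\mu$, then $\mm(\mu')=cI+\lambda\phi+[\phi,Y]$; the first two summands are already in the Einstein form, and the delicate point, which I expect to be the main obstacle, is to show the residual $[\phi,Y]$ is also a derivation of $\mu'$. The plan would be to exploit the minimum-vector geometry at a closed orbit to arrange that $Y$ lies in the $\sog(n)$-stabilizer of $\mu'$, in which case $[\phi,Y]\cdot\mu'=\phi\cdot(Y\cdot\mu')-Y\cdot(\phi\cdot\mu')=0$ since $\phi$ is itself a derivation, giving $\mm(\mu')=cI+\psi'$ with $\psi'$ a symmetric derivation.

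Finally, the closing statement would follow from Kempf's theorem on the unique optimal destabilizing one-parameter subgroup for a non-closed orbit under a reductive group action: there exists a unique closed orbit $G_{\phi}\cdot\mu_{0}\subset\overline{G_{\phi}\cdot\mu}$, reached as $t\to\infty$ along a one-parameter subgroup of $G_{\phi}$. Rationality of the eigenvalues of $\phi$ (Theorem \ref{nikopreeinstein}(1)(c)) together with the resulting $\QQ$-structure on $G_{\phi}$ would allow the generator $A$ to be chosen symmetric with integer eigenvalues in $\ggo_{\phi}$. Since $G_{\phi}\cdot\mu_{0}$ is closed, the equivalence just proven yields that $\ngo_{0}=(\RR^n,\mu_{0})$ is an Einstein nilradical, and disjointness of the two orbits forces $\ngo_{0}\not\cong\ngo$.
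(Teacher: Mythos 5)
The paper itself contains no proof of this statement: it is quoted verbatim from Nikolayevsky \cite[Theorem 5]{NIKOLAYEVSKY2}, so there is no internal argument to compare yours against. Measured against the strategy of the cited source, your skeleton is the right one: $\mathrm{G}_{\phi}$ is self-adjoint (after making $\phi$ symmetric) and hence real reductive, (ii)$\Leftrightarrow$(iii) is the real Kempf--Ness theorem of Richardson--Slodowy \cite{RICHARDSON1}, the $\mathrm{G}_{\phi}$-moment map is the projection of $\mm$ onto $\ggo_{\phi}\cap\sym(n)$, and the orthogonal complement of $\ggo_{\phi}\cap\sym(n)$ in $\sym(n)$ is $\RR I\oplus\RR\phi\oplus[\phi,\sog(n)]$.

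Two steps have genuine gaps. First, the ``main obstacle'' you flag --- the residual term $[\phi,Y]$ --- should not be attacked by forcing $Y$ into the $\sog(n)$-stabilizer of $\mu'$; nothing about a minimal vector provides such a normalization, and the step would not go through as proposed. The term is in fact absent from the start: $\phi$ is a symmetric semisimple derivation of every $\mu'\in \mathrm{G}_{\phi}\cdot\mu$, its eigenspaces are pairwise orthogonal and grade $\mu'$, and the explicit formula for the Ricci operator of a nilmanifold shows that $\Ric_{\mu'}$ preserves each eigenspace, so $[\mm(\mu'),\phi]=0$ and $\mm(\mu')$ has no component in $[\phi,\sog(n)]$. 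The vanishing of the projected moment map then leaves exactly $\mm(\mu')=cI+\lambda\phi$ with $\phi\in\Der(\mu')$, which is the condition of Theorem \ref{characterization}. Second, your justification of $\ngo_{0}\not\cong\ngo$ --- ``disjointness of the two orbits'' --- is a non sequitur: isomorphism means lying in the same $\Glr$-orbit, not the same $\mathrm{G}_{\phi}$-orbit, so distinct $\mathrm{G}_{\phi}$-orbits can a priori carry isomorphic algebras. The correct argument is that $\mathrm{G}_{\phi}\cdot\mu_{0}$ is closed while $\mathrm{G}_{\phi}\cdot\mu$ is not, so by the equivalence (i)$\Leftrightarrow$(ii) the algebra $\ngo_{0}$ is an Einstein nilradical and $\ngo$ is not, and an isomorphism would transport one property to the other. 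Finally, in (i)$\Rightarrow$(ii) the claim that the element carrying $\mu$ to the nilsoliton $\widetilde{\mu}$ can be pushed into $\mathrm{G}_{\phi}$ modulo automorphisms is exactly where the uniqueness of the pre-Einstein derivation up to $\Aut(\ngo)$ (Theorem \ref{nikopreeinstein}) must be used carefully; as written it is only a gesture.
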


The inner product in $V$ (and so the norm used in Theorem \ref{nikoclosedorbit}) is the induced by the canonical inner product of $\RR^n$ (see \cite[Equation (3.3)]{LAURET3}).

\begin{definition}\label{nicebasis}
  Let $\{X_1,...,X_n\}$ be a basis for a nilpotent Lie algebra $\ngo$, with ${[X_i,X_j]}=\sum \limits_{k=1}^{n} c_{ij}^kX_k$. The basis $\{X_i\}$ is called \textit{nice}, if for all $i, \,j,$ $\# \{k : c_{ij}^k \neq 0 \} \leq 1$ and for all $i, \,k,$ $\#\{j : c_{ij}^k \neq 0\} \leq 1$.
\end{definition}

In dimension $6$, there is only one real nilpotent Lie algebra that does not admit a nice basis, it is denoted by $L_{6,11}$ in \cite{GRAAF1}. In fact, following Graaf's classification, $L_{6,11}$ is the only which is not written in a nice basis. The dimensions of the descending central series and derived series for $L_{6,11}$ are $(6,3,2,1,0)$ and $(6,3,0)$, respectively, and it is easy to see that if a $6$-dimensional nilpotent Lie algebra with the same dimensions of the descending central series and derived series that $L_{6,11}$ admit a nice basis, then any such algebra must be isomorphic to $L_{6,12}$, $L_{6,13}$, $L_{5,6}\oplus \RR x_6$ or $L_{5,7} \oplus \RR x_6$. Thus $L_{6,11}$ can not admit a nice basis.
In dimension $7$, any complex nilpotent Lie algebra of rank greater or equal than $3$ has a nice basis (by a straightforward verification of the list \cite{MAGNIN1}).  It is proved in \cite{NIKOLAYEVSKY1} that every filiform Lie algebra admitting an $\NN$-gradation have a nice basis. Although the nice basis condition looks \textquotedblleft{very exclusive}\textquotedblright, it is satisfied by many families of nilpotent Lie algebras.

Let $\{E_{ij}\}$ be the canonical basis of $\glg_n(\RR)$ and let $\alpha_{ij}^k=E_{kk}-E_{ii}-E_{jj}$ with $1\leq i < j \leq n$ and $1\leq k \leq n$ denote the weights of the representation of $\glg_n(\RR)$ on $V$ (via the action of $\Glr$ on $V$). Let us now state Nikolayevsky's nice basis criterium.

\begin{theorem}\cite[Theorem 3.]{NIKOLAYEVSKY2}\label{nikonicebasis}
 Let $\ngo=(\RR^n,\mu)$  be a nilpotent Lie algebra, with $\mu = \sum c_{ij}^k \mu_{ij}^k$, $\mu\neq 0$. Let $F$ be an ordered set of weights that are related with $\mu$ (i.e. $\alpha_{ij}^k \in F$ if and only if $c_{ij}^k\neq 0$), set $m=\# F$ and define the (Gram) matrix $U \in \textsc{M}(m,\RR)$ as
$$
U_{p,q} :=  \tr(F(p)F(q)).
$$
If the canonical basis $\{e_i\}_{i=1}^n$ is a nice basis then the following conditions are equivalent:

\begin{enumerate}[\hspace*{0cm} (i)]
\item $\ngo$ is an Einstein nilradical.
\item The vector of minimum norm in the convex hull of $F$ is in the interior of the hull.
\item The equation $Ux=[1]_{m}$ has at least one solution $x$ with positive coordinates.
\end{enumerate}

\end{theorem}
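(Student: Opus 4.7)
The proof splits naturally into two equivalences: (ii)$\Leftrightarrow$(iii), which is convex-geometric, and (i)$\Leftrightarrow$(ii), which uses the moment-map formalism together with the nice-basis hypothesis.

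For (ii)$\Leftrightarrow$(iii), the plan is to apply the Karush--Kuhn--Tucker conditions to the strictly convex problem of minimizing $\|\sum_p t_p F(p)\|^2$ on the simplex $\{t_p\geq 0,\,\sum_p t_p=1\}$. At the unique minimizer $\beta=\sum_p t_p F(p)$, first-order optimality forces $\langle F(p),\beta\rangle=\lambda$ whenever $t_p>0$, with $\lambda=\|\beta\|^2$. Nilpotency gives $\langle F(p),I\rangle=\tr(F(p))=-1$ for every $p$, hence $0\notin\mathrm{conv}(F)$ and $\lambda>0$. The substitution $x_p=t_p/\lambda$ then sets up a bijection between interior minimizers and positive solutions of $Ux=[1]_m$.

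For (i)$\Leftrightarrow$(ii), the nice-basis hypothesis makes the weight vectors $\mu_p$ with $c_p\neq 0$ pairwise orthogonal in $V$, so that a direct computation delivers the diagonal form
\begin{equation*}
\mm(\mu)\;=\;\kappa\sum_p c_p^2\,F(p)\quad\text{for some }\kappa>0.
\end{equation*}
The diagonal torus $T\subset\Glr$ preserves the nice basis, acting on structure constants by $\widetilde c_{ij}^k=(a_ia_j/a_k)c_{ij}^k$, so one may parametrize Einstein candidates $\widetilde\mu$ along the orbit via the positive scalars $x_p=\widetilde c_p^{\,2}$. By Corollary \ref{beta} and Theorem \ref{characterization}, the Einstein-nilradical condition reduces to solving $\sum_p x_p F(p)=cI+\phi$ with $x_p>0$ and $\phi$ a diagonal derivation, for some $c<0$. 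The derivation identity $\phi_k=\phi_i+\phi_j$ on the support of $\mu$ translates exactly into $\langle F(p),\sum_q x_q F(q)\rangle=-c$ for every $p$, i.e.\ $Ux=(-c)\,[1]_m$ with positive $x$; renormalization yields $Ux=[1]_m$.

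The main obstacle is the realizability step in (iii)$\Rightarrow$(i): a positive $x$ solving $Ux=[1]_m$ produces a candidate diagonal $\beta$ with $\beta+I\in\Der(\ngo)$, yet the pure torus action may not reach every rescaling $(x_p)_p$ when the weights $F(p)$ are linearly dependent. The clean way around this is to invoke Theorem \ref{nikoclosedorbit}: with a pre-Einstein derivation $\phi_0$ (diagonal in the nice basis by semisimplicity and Theorem \ref{nikopreeinstein}), the group $\mathrm{G}_{\phi_0}$ contains the relevant subtorus, and closedness of $\mathrm{G}_{\phi_0}\cdot\mu$ is equivalent---by Kempf--Ness on the maximal torus combined with convexity of the moment polytope---to the interiority statement (ii). The defining trace identity $\tr(\phi_0\psi)=\tr(\psi)$ for $\psi\in\Der(\ngo)$ handles all normalizations needed to match the moment map of the candidate $\widetilde\mu$ with $-I+\phi$.
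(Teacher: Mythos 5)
The first thing to note is that the paper does not prove this statement at all: Theorem \ref{nikonicebasis} is imported verbatim from Nikolayevsky \cite[Theorem 3]{NIKOLAYEVSKY2} and used as a black box, so there is no in-paper proof to compare yours against. Judged on its own terms, your treatment of (ii)$\Leftrightarrow$(iii) is correct and is essentially the standard argument: the first-order (KKT) conditions on the simplex, the observation that $\tr F(p)=-1$ forces the minimal value $\lambda=\|\beta\|^2$ to be positive, and the rescaling $x_p=t_p/\lambda$ together give the bijection between relative-interior minimizers and positive solutions of $Ux=[1]_m$. Likewise the computation that a nice basis makes $\Ric_\mu$ diagonal, so that $\mm(\mu)$ is a positive combination $\sum_p c_p^2 F(p)$ of the weights, is right and is precisely why the hypothesis is there.

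The gap is exactly where you say it is, and your proposed repair does not close it. Realizing a prescribed positive vector $(x_p)$ as $(\widetilde c_p^{\,2})$ along the diagonal-torus orbit requires $\log(x_p/c_p^2)$ to lie in the row space of the weight matrix, which fails when the $F(p)$ are linearly dependent; and conversely, an Einstein representative $\widetilde\mu\in\Glr\cdot\mu$ furnished by Theorem \ref{characterization} need not sit on the torus orbit or be written in a nice basis, so (i)$\Rightarrow$(ii) is not a direct computation either. Your appeal to ``Kempf--Ness on the maximal torus combined with convexity of the moment polytope'' names the right circle of ideas but is not an argument: what actually has to be proved is that, for a nice basis, attainment of the minimum of $g\mapsto\|g\cdot\mu\|^2$ over the full reductive group $\mathrm{G}_{\phi}$ of Theorem \ref{nikoclosedorbit} is equivalent to its attainment over the diagonal subtorus of $\mathrm{G}_{\phi}$, and that at a torus-critical point the (still diagonal) moment map automatically lands in $\RR I+\RR\phi$ with the correct signs, via the trace identity (\ref{preeinstein}). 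That reduction from $\mathrm{G}_{\phi}$ to its torus, together with the translation of the toric closed-orbit criterion (zero in the relative interior of the hull of the projected weights) into statement (ii), is the real content of Nikolayevsky's theorem; as written, your sketch asserts it rather than proves it.
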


\section{Classification}\label{examples}

In this section, we study in detail some examples which cover all possible strategies we have used to obtain
the full classification of $7$-dimensional Einstein nilradicals. Each Lie algebra in \cite{MAGNIN1} has been worked out in some of these ways (see \cite{FERNANDEZ-CULMA2} or \cite{FERNANDEZ-CULMA1} for more information).

Undoubtedly, our principal tool is Nikolayevsky's nice basis criterium and Carles' classification. Following \cite{MAGNIN1}, the only algebras of $\rank \geq 1$ that are not written in a nice basis are $1.2(ii)$, $1.2(iv)$, $1.3(i_\lambda)$, $1.3(ii)$, $1.3(v)$, $1.11$, $1.12$, $1.13$, $1.14$, $1.15$, $1.16$, $1.17$, $1.18$, $1.21$, $2.2$, $2.11$, $2.24$, $2.25$, $2.26$, $2.27$, $2.37$. Some of these algebras may admit a nice basis or not. Recall that a $\rank$-zero nilpotent Lie algebra can not be an Einstein nilradical as it does not admit an $\NN$-gradation.

By using that any nilpotent Lie algebra of dimension less or equal than $6$ is an Einstein nilradical (\cite{LAURET2}, \cite{WILL1}), one obtains that any decomposable $7$-dimensional nilpotent Lie algebra is an Einstein nilradical.

\begin{theorem}
The classification of $7$-dimensional indecomposable Einstein nilradicals is given according to their rank in Tables $1$, $2$, $3$ and $4$.
\end{theorem}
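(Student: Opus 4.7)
The plan is to traverse Carles' list \cite{CARLES1} of $117$ indecomposable $7$-dimensional complex nilpotent Lie algebras (with the corrections of Magnin \cite{MAGNIN1}) together with the six continuous families, partition them by rank, and for each representative decide the Einstein-nilradical question. Since a $\rank$-zero algebra does not admit an $\NN$-gradation, Theorem \ref{nikopreeinstein} immediately excludes the $7$ isolated $\rank$-zero algebras and the corresponding $\rank$-zero curve. For every algebra of $\rank\geq 1$ I first compute a pre-Einstein derivation $\phi$; Theorem \ref{nikopreeinstein} guarantees its existence and its uniqueness up to automorphism, and its positive rational eigenvalues, once cleared of denominators, give the canonical candidate eigenvalue type $(d_1<\cdots<d_r;n_1,\ldots,n_r)$ to be recorded in Tables $1$--$4$.

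For each algebra whose basis in \cite{MAGNIN1} is already nice, I invoke Theorem \ref{nikonicebasis}: read off the set $F$ of weights $\alpha_{ij}^{k}=E_{kk}-E_{ii}-E_{jj}$ occurring in the structure constants, assemble the Gram matrix $U_{p,q}=\tr(F(p)F(q))$, and test solvability of $Ux=[1]_m$ with strictly positive coordinates. This is a finite, mechanical linear-algebra check that settles the question; when a positive solution exists, Corollary \ref{beta} even yields an explicit nilsoliton inner product, and the pre-Einstein eigenvalues match the predicted type. This step disposes of the bulk of the $110$ indecomposable algebras of $\rank\geq 1$, since by inspection of \cite{MAGNIN1} only $21$ isolated algebras and one curve fail to be written in a nice basis.

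The exceptional list requires bespoke arguments of two complementary kinds. For a suspected Einstein nilradical I search for an explicit $g\in \Glr$ (concretely, a basis change preserving the weight decomposition induced by $\phi$) such that $\mm(g\cdot\mu)$ takes the diagonal form predicted by equation (\ref{betaequ}); exhibiting such a $g$ certifies the algebra as an Einstein nilradical via Theorem \ref{characterization} and Corollary \ref{beta}. In the opposite direction, I apply Theorem \ref{nikoclosedorbit}: it suffices to find a symmetric $A\in \ggo_{\phi}$ with integer eigenvalues such that $\lim_{t\to\infty}\exp(tA)\cdot\mu$ is a non-isomorphic nilpotent law, whereupon non-closedness of $\mathrm{G}_{\phi}\cdot\mu$ in $V$ forces $\ngo$ not to be an Einstein nilradical. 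The anomalous curve is handled parametrically: $\phi$ and hence $\mathrm{G}_{\phi}$ are constant along the family, so a single nilsoliton ansatz can be tested as a rational condition on the parameter, isolating the at most two exceptional values per curve alluded to in the introduction.

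The main obstacle is precisely this exceptional list: Nikolayevsky's Gram-matrix criterion is not available, and for each non-Einstein candidate the destabilising one-parameter subgroup must be located inside the comparatively rigid reductive group $\mathrm{G}_{\phi}$, which is where the genuine work lies. Once every algebra and every curve has been treated in this way, the outcomes are sorted by $\rank$ and compiled into Tables $1$--$4$, recording for each Einstein nilradical its eigenvalue type and the associated nilsoliton data, and marking the remaining algebras as non-Einstein, yielding the totals of $82$ indecomposable Einstein nilradicals and $5$ curves of such claimed in the introduction.
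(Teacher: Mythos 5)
Your proposal follows essentially the same route as the paper: exclude the rank-zero algebras via the absence of an $\NN$-gradation, settle the algebras admitting a nice basis with Nikolayevsky's Gram-matrix criterion (Theorem \ref{nikonicebasis}), and treat the remaining twenty algebras and one curve either by exhibiting a nilsoliton via Corollary \ref{beta} or by analysing closedness of the $\mathrm{G}_{\phi}$-orbit via Theorem \ref{nikoclosedorbit}. The only notable divergence is in the hard positive cases whose eigenvalue type is shared by infinitely many pairwise non-isomorphic algebras (e.g.\ the curve $1.3(i_\lambda)$): there the paper does not solve the nilsoliton ansatz parametrically as you suggest, but instead proves that the orbit $\mathrm{G}_{\phi}\cdot\mu$ is closed by showing that every admissible destabilising direction in $\ggo_{\phi}$ forces a trivial degeneration --- a device you deploy only in the negative direction.
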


The notation in the tables is as follows: \begin{quote} $\checkmark$:= Yes, -:= No, EN:= Einstein Nilradical, pre-Einstein derivation:=\textquotedblleft{we give a pre-Einstein derivation with respect to the canonical basis}\textquotedblright, Min:= Minimum of $\F|_{\Glr\cdot\mu}$ (computed by using formula \ref{betanorm} with $3$ decimals), $\dim$ DCS:= Dimension of descending central series and $\dim \, \Der$:= Dimension of the algebra of derivations. \end{quote} In $\dim$ DCS we omit the first term, which is always 7. So for example the algebra $\ngo = (\RR^n,\mu)$ with $\mu= 1.3(i_0)$ is not an Einstein nilradical and hence minimum of $\F|_{\Glr\cdot\mu}$ does not exist, the dimension of its algebra of derivations is $13$ and dimension of descending central series is $(7,4,2,1,0)$ that correspond to $\ngo = \ngo_{0} \geq \ngo_{1}=[\ngo,\ngo_{0}] \geq \ngo_{2}=[\ngo,\ngo_{1}] \geq \ngo_{3}=[\ngo,\ngo_{2}] \geq \ngo_{4}=[\ngo,\ngo_{3}] = 0$.

\begin{example}\label{example1}\textbf{Exhibiting a nilsoliton inner product} \newline
In this example we show how to exhibit a nilsoliton inner product. We consider $\mathfrak{g}_{1,17}$
$$
\mu:= \left\{\begin{array}{l}
[e_1, e_2] = e_3, [e_1, e_3] = e_4, [e_1, e_4] = e_6, [e_1, e_6] = e_7, {[e_2, e_3]} = e_5,\\
{[e_2, e_5]} = e_6, [e_2, e_6] = e_7, [e_3, e_4] = -e_7, {[e_3, e_5]} = e_7\\
\end{array}\right.
$$
This algebra has $\rank$ $1$ and its maximal torus of derivations is generated by $\diag (1,1,2,3,3,4,5)$. It follows from corollary \ref{beta} that if $\ggo_{1.17}$ is an Einstein nilradical, then there must be a Lie algebra law $\widetilde{\mu}$ in $\mathrm{GL}_7(\RR) \cdot \mu$ with $\diag (1,1,2,3,3,4,5)$ as Einstein derivation and moment map equal to
\begin{equation}\label{ee_1}
\mm(\widetilde{\mu})=\diag\left(-\frac{23}{47},-\frac{23}{47},-\frac{27}{94},-\frac{4}{47},-\frac{4}{47},\frac{11}{94},\frac{15}{47} \right)
\end{equation}

However, any algebra law admitting $\diag (1,1,2,3,3,4,5)$ as a derivation is of the form
$$
\mu(a_1,...,a_{13}):= \left\{\begin{array}{l}
[e_1,e_2]=a_1 e_3, [e_1,e_3]=a_2 e_4+a_3 e_5, \\
{[e_1,e_4]}=a_4 e_6, [e_1,e_5]=a_5 e_6, [e_1,e_6]=a_6 e_7, \\
{[e_2,e_3]}=a_7 e_4 + a_8 e_5, [e_2,e_4]=a_9 e_6, {[e_2,e_5]}=a_{10} e_6, \\
{[e_2,e_6]}=a_{11} e_7, [e_3,e_4]=a_{12} e_7, {[e_3,e_5]}=a_{13} e_7.
\end{array}\right.
$$
If $J$ represents the Jacobi condition, then Einstein nilradicals of eigenvalue type $(1<2<3<4<5 ; 2,1,2,1,1)$ are characterized by  $J(\mu(a_1,...,a_{13}))=0$ and  $\mm(\mu(a_1,...,a_{13}))$ as in (\ref{ee_1}), or equivalently, by the following polynomial equations system:

$$\begin{array}{ll}
J(\mu(a_1,...,a_{13})) &\left\{\begin{array}{l} -a_{10}a_{6}+a_{5}a_{11}+a_{1}a_{13}=0,\\ -a_{9}a_{6}+a_{4}a_{11}+a_{1}a_{12}=0,\\ -a_{8}a_{5}+a_{3}a_{10}-a_{7}a_{4}+a_{2}a_{9}=0,\end{array}\right.\\
\mm(\mu(a_1,...,a_{13})) & \left\{\begin{array}{l}
-2a_{1}^2-2a_{2}^2-2a_{3}^2-2a_{4}^2-2a_{5}^2-2a_{6}^2=-\frac{23}{47}, \\
-2 a_{2}a_{7}-2a_{3}a_{8}-2a_{4}a_{9}-2a_{5}a_{10}-2a_{6}a_{11} =0, \\
-2a_{1}^2-2a_{7}^2-2a_{8}^2-2a_{9}^2-2a_{10}^2-2a_{11}^2=-\frac{23}{47},\\
2a_{1}^2-2a_{2}^2-2a_{3}^2-2a_{7}^2-2a_{8}^2-2a_{12}^2-2a_{13}^2=-\frac{27}{94}, \\
2a_{2}^2-2a_{4}^2+2a_{7}^2-2a_{9}^2-2a_{12}^2=-\frac{4}{47},\\
2a_{2}a_{3}-2a_{4}a_{5}+2a_{7}a_{8}-2a_{9}a_{10}-2a_{12}a_{13}=0,\\
2a_{3}^2-2a_{5}^2+2a_{8}^2-2a_{10}^2-2a_{13}^2=-\frac{4}{47}, \\
2a_{4}^2+2a_{5}^2-2a_{6}^2+2a_{9}^2+2a_{10}^2-2a_{11}^2=\frac{11}{94},\\
2a_{6}^2+2a_{11}^2+2a_{12}^2+2a_{13}^2=\frac{15}{47}.
\end{array}\right.
\end{array}$$

Since an eigenvalue type may have many non isomorphic Einstein nilradicals, we must find a solution such that its nilpotent Lie algebra law is isomorphic to $\ggo_{1.17}$.  A priori, we have found a pre-Einstein derivation to all $7$-dimensional indecomposable algebras in \cite{MAGNIN1} and $\ggo_{1.17}$ is the only indecomposable nilpotent Lie algebra that has a pre-Einstein derivation of eigenvalue type $(1<2<3<4<5 ; 2,1,2,1,1)$, thus we only have to verify the indecomposability property between solutions of such equations system to show that $\ggo_{1.17}$ is an Einstein nilradical. By using Gr$\ddot{\mbox{o}}$bner basis to solve the polynomial equations system we find some solutions given by:
\begin{equation}\label{exsol}
\begin{array}{l}
a_1=\pm \frac{\sqrt {611}}{94},
a_2=0,
a_3=\pm \frac{\sqrt {235}}{47},
a_4= 0,
a_5=\pm \frac{\sqrt {611}}{94},
a_6=0,\\
a_7=\pm \frac{\sqrt {235}}{94},
a_8=0,
a_9=\pm \frac{\sqrt {611}}{94},
a_{10}=0,
a_{11}=\pm \frac {\sqrt {705}}{94},\\
a_{12}=0,
a_{13}=-\frac{a_5 a_{11}}{a_1}.
\end{array}
\end{equation}
By fixing a solution we have an Einstein nilradical $(\RR^n, \widetilde{\mu})$ given by
$$
\widetilde{\mu}:=\left\{\begin{array}{l}
[e_1, e_2] = \frac{\sqrt{611}}{94}e_3, [e_1,e_3] = \frac{\sqrt{235}}{47}e_5, [e_1, e_5] = \frac{\sqrt{611}}{94}e_6,  [e_2, e_3] = \frac{\sqrt{235}}{94}e_4, \\
{[e_2, e_4]} = \frac{\sqrt{611}}{94}e_6,  [e_2, e_6] = \frac{\sqrt{705}}{94}e_7, [e_3, e_5] = -\frac{\sqrt{705}}{94}e_7
\end{array}\right.
$$
As $(\RR^n, \widetilde{\mu})$ is indecomposable, by the analysis above, this must be isomorphic to $(\RR^n,{\mu})$. To find an isomorphism, since the map $\diag (1,1,2,3,3,4,5)$  with respect to the basis $\{e_i\}$ is a derivation of both algebras, we can assume that an isomorphism is given by a matrix in $\mathrm{GL}_7(\RR)$ that commutes with $\diag (1,1,2,3,3,4,5)$,

$$g=\diag \left( \left(\begin{array}{cc} b_{1,1} & b_{1,2} \\ b_{2,1} & b_{2,2} \end{array}\right) , b_{3,3} , \left(\begin{array}{cc} b_{4,4} & b_{4,5} \\ b_{5,4} & b_{5,5} \end{array}\right) , b_{6,6}, b_{7,7} \right) $$

By solving the equation $g\cdot \mu = \widetilde{\mu}$ we get
$$
g=\left( \begin {array}{ccccccc}
1&-1&0&0&0&0&0\\
\sqrt {2}&\sqrt {2}&0&0&0&0&0\\
0&0&\frac{\sqrt {1222}}{47}&0&0&0&0\\
0&0&0&\frac{\sqrt {65}}{47} &\frac{\sqrt {65}}{47}&0&0\\
0&0&0&\frac{\sqrt {130}}{47} & -\frac{\sqrt {130}}{47}&0&0\\
0&0&0&0&0&{\frac {13\sqrt {470}}{2209}}&0\\
0&0&0&0&0&0&{\frac {65\sqrt{3}}{2209}}
\end {array} \right),
$$
and thus $\ggo_{1.17}$ is an Einstein nilradical, as was to be shown.
\end{example}
\begin{center}
\begin{tabular}{|l|c|c|c|c|c|}
\hline
\multicolumn{6}{     |c|      }  {\centering Rank one }  \tabularnewline
\hline
\multicolumn{1}{ |m{1.5cm }|  }  {\centering $\mathfrak{n}$ } &
\multicolumn{1}{  m{0.5cm }|  }  {\centering EN } &
\multicolumn{1}{  m{4.5cm }|  }  {\centering pre-Einstein \\ derivation} &
\multicolumn{1}{  m{1cm}|     }  {\centering Min } &
\multicolumn{1}{  m{0.75cm }| }  {\centering $\dim$ $\Der$} &
\multicolumn{1}{  m{2cm }|    }  {\centering $\dim$ \\ DCS} \tabularnewline
\hline
\hline
$1.01(i)$&-&$(0, 1, 0, 1, 1, 1, 1)$ & - &$11$&$(4, 3, 2, 1)$ \\
\hline
$1.01(ii)$&-&$(0, 1, 0, 1, 1, 1, 1)$& - &$12$&$(4, 3, 2, 1)$ \\
\hline
$1.02$&-&$\frac{1}{2}(0, 1, 1, 1, 2, 2, 3)$& - &$11$&$(5, 4, 2, 1)$ \\
\hline
$1.03$&-&$\frac{2}{3}(0, 1, 1, 1, 1, 2, 2)$& - &$11$&$(5, 4, 2, 1)$\\
\hline
$1.1(i_\lambda)$  & \multirow{2}{*}{$\checkmark$} & \multirow{2}{*}{$\frac{1}{5}(1, 2, 3, 4, 5, 6, 7)$} & \multirow{2}{*}{$0.714$} & \multirow{2}{*}{$10$}& \multirow{2}{*}{$(5, 4, 3, 2, 1)$} \\
$\lambda\neq 0,1$& & & & & \\
\hline
$1.1(i_\lambda)$ &\multirow{2}{*}{-}&\multirow{2}{*}{$\frac{1}{5}(1, 2, 3, 4, 5, 6, 7)$}& \multirow{2}{*}{-}&\multirow{2}{*}{$10$}&\multirow{2}{*}{$(5, 4, 3, 2, 1)$}\\
$\lambda = 0$    & & & & & \\
\hline
$1.1(i_\lambda)$ &\multirow{2}{*}{-}&\multirow{2}{*}{$\frac{1}{5}(1, 2, 3, 4, 5, 6, 7)$}&\multirow{2}{*}{ - }&\multirow{2}{*}{$11$}&\multirow{2}{*}{$(5, 4, 3, 2, 1)$ }\\
$\lambda = 1$ &&&&& \\
\hline
$1.1(ii)$&-&$\frac{1}{5}(1, 2, 3, 4, 5, 6, 7)$& - & $11$& $(5, 4, 3, 2, 1)$\\
\hline
$1.1(iii)$&$\checkmark$&$\frac{1}{5}(1, 2, 3, 4, 5, 6, 7)$&$0.714$&$10$&$(5, 4, 3, 2)$ \\
\hline
$1.1(iv)$&-&$\frac{1}{5}(1, 2, 3, 4, 5, 6, 7)$& - &$11$&$(5, 4, 2, 1)$ \\
\hline
$1.1(v) $&-&$\frac{1}{5}(1, 2, 3, 4, 5, 6, 7)$& - & $10$ &$(4, 3, 2, 1)$ \\
\hline
\end{tabular}
\end{center}
\begin{center}
\begin{tabular}{|l|c|c|c|c|c|}
\hline
\multicolumn{6}{     |c|      }  {\centering Rank one }  \tabularnewline
\hline
\multicolumn{1}{ |m{1.5cm }|  }  {\centering $\mathfrak{n}$ } &
\multicolumn{1}{  m{0.5cm }|  }  {\centering EN } &
\multicolumn{1}{  m{4.5cm }|  }  {\centering pre-Einstein \\ derivation} &
\multicolumn{1}{  m{1cm}|     }  {\centering Min } &
\multicolumn{1}{  m{0.75cm }| }  {\centering $\dim$ $\Der$} &
\multicolumn{1}{  m{2cm }|    }  {\centering $\dim$ \\ DCS} \tabularnewline
\hline
\hline
$1.1(vi)$&-&$\frac{1}{5}(1, 2, 3, 4, 5, 6, 7)$ & - &$11$& $(4, 3, 2, 1)$ \\
\hline
$1.2(i_\lambda)$&\multirow{2}{*}{ $\checkmark$ } &\multirow{2}{*}{$\frac{4}{11}(1, 1, 2, 2, 3, 3, 4)$}&\multirow{2}{*}{$0.846$}&\multirow{2}{*}{$12$}&\multirow{2}{*}{$(4, 3, 1)$} \\
$\lambda \neq 0 , 1$&&&&& \\
\hline
$1.2(i_\lambda)$ &\multirow{2}{*}{$\checkmark$}&\multirow{2}{*}{$\frac{4}{11}(1, 1, 2, 2, 3, 3, 4)$}&\multirow{2}{*}{$0.846$}&\multirow{2}{*}{$12$}&\multirow{2}{*}{$(4, 3, 1)$} \\
$\lambda = 0$    &&&&& \\
\hline
$1.2(i_\lambda)$&\multirow{2}{*}{ $\checkmark$ } &\multirow{2}{*}{$\frac{4}{11}(1, 1, 2, 2, 3, 3, 4)$}&\multirow{2}{*}{$0.846$}&\multirow{2}{*}{$12$}&\multirow{2}{*}{ $(4, 3, 1)$}\\
$\lambda = 1$   &&&&& \\
\hline
$1.2(ii)$&-&$\frac{4}{11}(1, 1, 2, 2, 3, 3, 4)$& - & $12$ & $(4, 3, 1)$ \\
\hline
$1.2(iii)$&-&$\frac{4}{11}(1, 1, 2, 2, 3, 3, 4)$& - &$12$& $(4, 3, 1)$\\
\hline
$1.2(iv)$&-&$\frac{4}{11}(1, 1, 2, 2, 3, 3, 4)$& - & $12 $ &  $(4, 2, 1)$ \\
\hline
$1.3(i_\lambda)$&\multirow{2}{*}{$\checkmark$}&\multirow{2}{*}{$\frac{5}{17}(1, 2, 2, 3, 3, 4, 5)$}&\multirow{2}{*}{$0.895$}&\multirow{2}{*}{$13$}&\multirow{2}{*}{ $(4, 2, 1)$}\\
$\lambda \neq 0$&&&&& \\
\hline
$1.3(i_\lambda)$&\multirow{2}{*}{-}&\multirow{2}{*}{$\frac{5}{17}(1, 2, 2, 3, 3, 4, 5)$}&\multirow{2}{*}{-}
&\multirow{2}{*}{$13$}&\multirow{2}{*}{ $(4, 2, 1)$}\\
$\lambda = 0$&&&&& \\
\hline
$1.3(ii)$&-&$\frac{5}{17}(1,2,2,3,3,4,5)$ & - & $14$ & $(4, 2, 1)$\\
\hline
$1.3(iii)$&$\checkmark$&$\frac{5}{17}(1, 2, 2, 3, 3, 4, 5)$&$0.895$&$13$& $(4, 2, 1)$\\
\hline
$1.3(iv)$&-&$\frac{5}{17}(1, 2, 2, 3, 3, 4, 5 )$& - & $13$& $(4, 2)$ \\
\hline
$1.3(v)$ &-&$\frac{5}{17}( 1, 2, 2, 3, 3, 4, 5)$& - & $13$& $(3, 2, 1)$\\
\hline
$1.4$    &$\checkmark$&$\frac{17}{100}(1, 3, 4, 5, 6, 7, 8)$&$0.820$& $12$& $(5, 4, 3, 2, 1)$\\
\hline
$1.5$    &$\checkmark$&$\frac{5}{31}(1, 3, 4, 5, 6, 7, 9) $&$0.738$&$ 11$&$(5, 4, 3, 2)$ \\
\hline
$1.6$    &$\checkmark$&$\frac{5}{34}(1, 4, 5, 6, 7, 8, 9)$&$0.895$&$12$& $(5, 4, 3, 2, 1)$\\
\hline
$1.7$    &$\checkmark$&$\frac{5}{29}(2, 3, 4, 5, 6, 7, 8)$&$1.04$&$15$&$(4, 2)$ \\
\hline
$1.8$    &-&$\frac{20}{139}(2, 4, 3, 6, 7, 8, 10)$& - &$11$& $(4, 2, 1)$\\
\hline
$1.9$    &-&$\frac{10}{67}(2, 3, 6, 5, 7, 8, 9)$  &  -   &$14$& $(4, 3, 1)$\\
\hline
$1.10$   &$\checkmark$&$\frac{45}{353}(2, 3, 5, 7, 8, 9, 11)$&$0.792$&$11$& $(5, 4, 2, 1)$\\
\hline
$1.11$   &$\checkmark$&$\frac{6}{25}(1,2,3,3,4,5,6)$    &$0.806$&$11$&  $(4, 3, 2, 1)$\\
\hline
$1.12$   &$\checkmark$&$\frac{25}{107}(1,2,4,3,4,5,6)$  &$0.863$&$12$&  $(4, 3, 2, 1)$\\
\hline
$1.13$   &$\checkmark$&$\frac{13}{58}(1,2,3,4,5,5,6)$   &$0.853$&$12$&  $(5, 4, 2, 1)$\\
\hline
$1.14$   &$\checkmark$&$\frac{ 9}{43}(1,2,3,4,5,5,7)$    &$0.741$&$11$&  $(5, 4, 2, 1)$ \\
\hline
\end{tabular}
\end{center}

\begin{center}
\begin{tabular}{|l|c|c|c|c|c|}
\hline
\multicolumn{6}{     |c|      }  {\centering Rank one }  \tabularnewline
\hline
\multicolumn{1}{ |m{1.5cm }|  }  {\centering $\mathfrak{n}$ } &
\multicolumn{1}{  m{0.5cm }|  }  {\centering EN } &
\multicolumn{1}{  m{4.5cm }|  }  {\centering pre-Einstein \\ derivation} &
\multicolumn{1}{  m{1cm}|     }  {\centering Min } &
\multicolumn{1}{  m{0.75cm }| }  {\centering $\dim$ $\Der$} &
\multicolumn{1}{  m{2cm }|    }  {\centering $\dim$ \\ DCS} \tabularnewline
\hline
\hline
$1.15$   &$\checkmark$&$\frac{15}{76}(1,3,4,4,5,6,7)$   &$0.927$&$13$&  $(4, 3, 2, 1)$ \\
\hline
$1.16$   &$\checkmark$&$\frac{11}{40}(1,2,3,3,4,4,5)$   &$1.05 $&$15$&  $(4, 2, 1)$\\
\hline
$1.17$   &$\checkmark$&$\frac{19}{65}(1,1,2,3,3,4,5)$   &$0.692$&$11$&  $(5, 4, 2, 1)$\\
\hline
$1.18$   &$\checkmark$&$\frac{23}{89}(1,2,3,3,4,5,5)$   &$0.947$&$13$&  $(4, 3, 1)$\\
\hline
$1.19$   &$\checkmark$&$\frac{13}{29}(1,1,1,2,2,3,3)$   &$0.853$&$11$&  $(4, 2)$\\
\hline
$1.20$   &-&$\frac{8}{47}(1,2,3,5,6,7,8)$    &   -   &$11$&  $(4, 3, 2, 1)$\\
\hline
$1.21$   &-&$\frac{25}{113}(1,2,3,3,4,5,7)$  &   -   &$11$&  $(4, 3, 2, 1)$\\
\hline
\multicolumn{6}{c}  {\footnotesize{\textsc{Table 1.} Classification of $7$-dimensional indecomposable}}  \tabularnewline
\multicolumn{6}{c}  {\footnotesize{Einstein nilradicals. Rank one case.}}  \tabularnewline
\end{tabular}
\end{center}

\begin{example}\label{example0} \textbf{Showing that the $G_{\phi}$-orbit is closed} \newline
In this example we consider exclusively $\ggo_{1.3(i_{\lambda})}$. This one-parameter family is the only one that cannot be covered by the others examples.
$$
\mu:= \left\{ \begin{array}{l}
[e_1,e_2]=e_4,[e_1,e_3]=e_5,[e_1,e_4]=e_6,[e_1,e_6]=e_7,[e_2,e_3]=e_6,\\
{[e_2,e_4]}=\lambda e_7,[e_2,e_5]=e_7,[e_3,e_5]=e_7 \end{array} \right.
$$
For any $\lambda \neq 0$, $\ggo_{1.3(i_{\lambda})}$ is an Einstein nilradical. We prove this by contradiction; assume that $\ggo_{1.3(i_{\lambda})}$ is not an Einstein nilradical. The derivation $\phi$ given by the diagonal matrix $\frac{5 }{17 }\diag( 1, 2, 2, 3, 3, 4, 5 )$ with respect to the basis $\{e_i\}$ is pre-Einstein. It follows from Theorem \ref{nikoclosedorbit} that the orbit $G_{\phi}\cdot\mu$ is not closed and so there exists $Y\in \ggo_{\phi}$ , $Y$ is a symmetric matrix, such that $\mu$ degenerates by the action of the one-parameter subgroup $\exp(tY)$ as $t\rightarrow \infty$. As $Y\in \ggo_{\phi}$ and $Y$ is symmetric, then there exists $X=\diag(a_1,...,a_7) \in \ggo_{\phi}$ and $A(\alpha)$, $B(\beta)$ in $\SO_2(\RR)$ such that
$$
Y=\diag(1,A(-\alpha),B(-\beta),1,1) X \diag(1,A(\alpha),B(\beta),1,1).
$$
As the action is continuous, it follows that $\mu$ also degenerates by the action of
$$
g_t:=\exp(tX) \diag(1,A(\alpha),B(\beta),1,1)
$$
as $t\rightarrow \infty$. The contradiction will be found in this last fact. The action of $g_t$ in $\mu$ is
$$
g_t\cdot\mu =\left\{\begin{array}{l}
{[e_1,e_2]}={\euler^{-t \left( a_1+a_2-a_4 \right) }}\cos \left( \alpha-\beta \right) e_4-{\euler^{-t \left( a_1+a_2-a_5 \right) }}\sin \left( \alpha-\beta \right) e_5,\\
{[e_1,e_3]}={\euler^{-t \left( a_1+a_3-a_4 \right) }}\sin \left( \alpha-\beta \right) e_4+{\euler^{-t \left( a_1+a_3-a_5 \right) }}\cos \left( \alpha-\beta \right) e_5,\\
{[e_1,e_4]}={\euler^{-t \left( a_1+a_4-a_6 \right) }}\cos \left( \beta \right) e_6,[e_1,e_5]={\euler^{-t\left( a_1+a_5-a_6 \right) }}\sin \left( \beta \right) e_6,\\
{[e_1,e_6]}={\euler^{-t \left( a_1+a_6-a_7 \right) }}e_7,[e_2,e_3]={\euler^{-t \left( a_2+a_3-a_6 \right) }}e_6,\\
{[e_2,e_4]}={\euler^{-t \left( a_2+a_4-a_7 \right) }} f_{2,4}(\alpha,\beta)e_7,\\
{[e_2,e_5]}={\euler^{-t \left( a_2+a_5-a_7 \right) }} f_{2,5}(\alpha,\beta)e_7,\\
{[e_3,e_4]}={\euler^{-t \left( a_3+a_4-a_7 \right) }} f_{3,4}(\alpha,\beta)e_7,\\
{[e_3,e_5]}={\euler^{-t \left( a_3+a_5-a_7 \right) }} f_{3,5}(\alpha,\beta)e_7.
\end{array}
\right.
$$
with
$$
\begin{array}{l}
f_{2,4}(\alpha, \beta)=(\cos \left( \alpha \right) \cos \left( \beta \right) \lambda + \sin \left( \alpha \right) \sin \left( \beta \right) -\cos \left( \alpha \right) \sin \left( \beta \right) ),\\
f_{2,5}(\alpha, \beta)=(\cos \left( \alpha \right) \sin \left( \beta \right) \lambda - \sin \left( \alpha \right) \cos \left( \beta \right) +\cos \left( \alpha \right) \cos \left( \beta \right) ),\\
f_{3,4}(\alpha, \beta)=(\sin \left( \alpha \right) \cos \left( \beta \right) \lambda - \cos \left( \alpha \right) \sin \left( \beta \right) -\sin \left( \alpha \right) \sin \left( \beta \right) ),\\
f_{3,5}(\alpha, \beta)=(\sin \left( \alpha \right) \sin \left( \beta \right) \lambda + \cos \left( \alpha \right) \cos \left( \beta \right) +\sin \left( \alpha \right) \cos \left( \beta \right) ).
\end{array}
$$
According to values of $\alpha$ and $\beta$ some terms are zero in the Lie algebra law $g_t\cdot \mu$ and as the degeneration is determined by non-zero terms, our attention is in the exponent of the exponential factor of such terms; when $t>0$, such exponent must be non negative.

It is easy to see that pairs of functions $\{f_{2,4},f_{2,5}\}$, $\{f_{2,4},f_{3,4}\}$, $\{f_{2,5},f_{3,5}\}$ and $\{f_{3,4},f_{3,5}\}$ do not vanish simultaneously (as $\sin$ and $\cos$). We have the following cases depending on which terms are non zero.

\

\noindent I) $\cos(\beta)$ and $\sin(\beta)$ are non zero \newline
1. $\cos(\alpha-\beta),\,f_{2,4},\,f_{3,4}$ are non zero \newline

If this is the case then it must be that $a_1,...,a_7$ satisfy: $a_1+a_2+a_3+a_4+a_5+a_6+a_7=0$, $a_1+2a_2+2a_3+3a_4+3a_5+4a_6+5a_7=0$ since $X\in \ggo_{\phi}$ and $a_1+a_6-a_7\geq 0$, $a_2+a_3-a_6 \geq 0$, $a_1+a_4-a_6\geq 0$, $a_1+a_5-a_6\geq 0$, $a_1+a_2-a_4 \geq 0$, $a_1+a_3-a_5 \geq 0$, $ a_2+a_4-a_7 \geq 0$ and $a_3+a_4-a_7 \geq 0$.

Instead of solving the inequalities system, we can do the next trick: we introduce a new variable $c_i$ for each inequality $q_i$ and we have $q_i - c_i^2=0$ and so we must solve the polynomial equations system:
$$
\left\{\begin{array}{l}
a_1+a_2+a_3+a_4+a_5+a_6+a_7=0,\\
a_1+2a_2+2a_3+3a_4+3a_5+4a_6+5a_7=0,\\
a_1+a_6-a_7 - c_1^2=0,\,a_2+a_3-a_6 - c_2^2=0,\\
a_1+a_4-a_6 - c_3^2=0,\,a_1+a_5-a_6 - c_4^2=0,\\
a_1+a_2-a_4 - c_5^2=0,\,a_1+a_3-a_5 - c_6^2=0,\\
a_2+a_4-a_7 - c_7^2=0,\,a_3+a_4-a_7 - c_8^2=0.\\
\end{array}\right.
$$
We set that $c_1^2+4c_4^2+5c_5^2+2c_6^2+2c_7^2+5c_8^2=0$, $c_2^2-6c_5^2-6c_8^2-4c_4^2-2c_6^2-2c_7^2=0$ and $c_3^2+c_5^2+c_8^2-c_4^2-c_6^2-c_7^2=0$. By the first equality  $c_1$, $c_4$, $c_5$, $c_6$, $c_7$ and $c_8$ are zero and so $c_2$ and $c_3$ are zero too. The degeneration is therefore trivial and so in this case we get a contradiction.
Remaining cases are similar and we can see them in \cite{FERNANDEZ-CULMA2} or \cite{FERNANDEZ-CULMA1}. So, no matter which case is, the degeneration is trivial. Hence $\ggo_{1.3(i_{\lambda})}$, with $\lambda \neq 0$, must be an Einstein Nilradical.
\end{example}
\begin{center}
\begin{tabular}{|l|c|c|c|c|c|}
\hline
\multicolumn{6}{     |c|      }  {\centering Rank two }  \tabularnewline
\hline
\multicolumn{1}{ |m{1.5cm }|  }  {\centering $\mathfrak{n}$ } &
\multicolumn{1}{  m{0.5cm }|  }  {\centering EN } &
\multicolumn{1}{  m{4.5cm }|  }  {\centering pre-Einstein \\ derivation} &
\multicolumn{1}{  m{1cm}|     }  {\centering Min } &
\multicolumn{1}{  m{0.75cm }| }  {\centering $\dim$ $\Der$} &
\multicolumn{1}{  m{2cm }|    }  {\centering $\dim$ \\ DCS} \tabularnewline
\hline
\hline
$2.1(i_\lambda)$&\multirow{2}{*}{$\checkmark$}&\multirow{2}{*}{$\frac{2}{19}(3,5,6,8,9,11,14)$}&\multirow{2}{*}{$0.905$}&\multirow{2}{*}{$14$}&\multirow{2}{*}{$(4, 2, 1)$}\\
$\lambda \neq 0,1$&&&&&\\
\hline
$2.1(i_\lambda)$&\multirow{2}{*}{-}&\multirow{2}{*}{$\frac{2}{19}(3,5,6,8,9,11,14)$}&\multirow{2}{*}{-}&\multirow{2}{*}{$14$}&\multirow{2}{*}{$(4, 2, 1)$}\\
$\lambda=0 $&&&&&\\
\hline
\end{tabular}
\end{center}

\begin{center}
\begin{tabular}{|l|c|c|c|c|c|}
\hline
\multicolumn{6}{     |c|      }  {\centering Rank two }  \tabularnewline
\hline
\multicolumn{1}{ |m{1.5cm }|  }  {\centering $\mathfrak{n}$ } &
\multicolumn{1}{  m{0.5cm }|  }  {\centering EN } &
\multicolumn{1}{  m{4.5cm }|  }  {\centering pre-Einstein \\ derivation} &
\multicolumn{1}{  m{1cm}|     }  {\centering Min } &
\multicolumn{1}{  m{0.75cm }| }  {\centering $\dim$ $\Der$} &
\multicolumn{1}{  m{2cm }|    }  {\centering $\dim$ \\ DCS} \tabularnewline
\hline
\hline
$2.1(i_\lambda)$&\multirow{2}{*}{$\checkmark$}&\multirow{2}{*}{$\frac{2}{19}(3,5,6,8,9,11,14)$}&\multirow{2}{*}{$0.905$}&\multirow{2}{*}{$14$}&\multirow{2}{*}{$(4, 2, 1)$}\\
$\lambda=1$&&&&&\\
\hline
$2.1(ii)$       &$\checkmark$&$\frac{2}{19}(3,5,6,8,9,11,14)$&$0.905$&$14$&$(4, 2, 1)$\\
\hline
$2.1(iii)$      &$\checkmark$&$\frac{2}{19}(3,5,6,8,9,11,14)$&$0.905$&$14$&$(3, 2, 1)$\\
\hline
$2.1(iv)$       &-&$\frac{2}{19}(3,5,6,8,9,11,14)$&  -  &$14$&$(3, 1)$\\
\hline
$2.1(v)$        &-&$\frac{2}{19}(3,5,6,8,9,11,14)$&  -  &$14$&$(4, 2)$\\
\hline
$2.2$           &-&$\frac{1}{2}(1,1,1,2,2,2,3)   $&  -  &$15$&$(4, 1)$\\
\hline
$2.3$           &$\checkmark$&$\frac{2}{37}(1,16,17,18,19,20,21)$&$1.06$ &$13$&$(5,4,3,2,1)$\\
\hline
$2.4$           &$\checkmark$&$\frac{7}{52}(1,4,5,6,7,8,11)$     &$0.743$&$12$&$(5,4,3,2)$\\
\hline
$2.5$           &$\checkmark$&$\frac{1 }{5 }( 1, 2, 3, 4, 5, 6, 7)$&$0.714$&$12$&$(5, 4, 2, 1)$\\
\hline
$2.6 $          &$\checkmark$&$\frac{ 1}{ 52}(10, 23, 33, 43, 56, 53, 76 )$&$0.743$&$12$&$(5, 4, 2, 1)$\\
\hline
$2.7 $          &$\checkmark$&$\frac{1 }{18 }( 3, 10, 13, 16, 23, 19, 22 )$&$0.9$&$13$&$(5, 4, 2, 1)$\\
\hline
$2.8$           &$\checkmark$&$\frac{1 }{12 }(3, 5, 8, 11, 13, 14, 16 )   $&$0.857$&$13$&$(5, 4, 2)$\\
\hline
$2.9$           &$\checkmark$&$\frac{9 }{28 }(1, 1, 2, 3, 3, 4, 4 )$&$0.824$&$12$&$(5, 4, 2)$\\
\hline
$2.10$          &-&$\frac{ 1}{ 5}(1, 2, 6, 3, 4, 5, 7 )$  &  -  &$12$&$(4, 3, 2, 1)$\\
\hline
$2.11$          &$\checkmark$&$\frac{ 1}{ 36}(9,19,28,28,37,47,46)$  &$0.947$&$14$&$(4, 3, 1)$\\
\hline
$2.12$          &$\checkmark$&$\frac{1 }{9 }(3, 5, 5, 8, 8, 11, 13 )$&$0.9$&$14$&(4, 2)\\
\hline
$2.13$          &$\checkmark$&$\frac{1 }{60 }( 16, 21, 48, 37, 53, 69, 90)$&$0.698$&$12$&$(4, 3, 2, 1)$\\
\hline
$2.14$          &$\checkmark$&$\frac{1 }{5 }(1, 3, 2, 4, 5, 6, 7 )$&$0.714$&$12$&$(4, 3, 2, 1)$\\
\hline
$2.15$          &$\checkmark$&$\frac{ 1}{ 5}(1, 3, 3, 4, 5, 6, 7 )$&$0.833$&$ 13$&$(4, 3, 2, 1)$\\
\hline
$2.16$          &$\checkmark$&$\frac{1 }{ 27}(5, 17, 20, 22, 27, 32, 37 )$&$0.931$&$14$&$(4, 3, 2, 1)$\\
\hline
$2.17$          &$\checkmark$&$\frac{1 }{12 }(4, 5, 8, 9, 13, 14, 17 )$&$0.857$&$13$&$(4, 3, 1)$\\
\hline
$2.18$          &$\checkmark$&$\frac{1 }{68 }( 20, 31, 60, 51, 71, 82, 91)$&$0.971$&$15$&$(4, 3, 1)$\\
\hline
$2.19$          &-&$\frac{1 }{4 }(1, 2, 4, 3, 4, 5, 5 )        $&  -    &$15$&$(4, 3, 1)$\\
\hline
$2.20$          &$\checkmark$&$\frac{ 2}{37 }(5, 16, 10, 21, 15, 20, 25)$  &$1.06$ &$16$&$(4, 2, 1)$\\
\hline
$2.21 $         &$\checkmark$&$\frac{1 }{31 }(8, 19, 24, 27, 32, 35, 43 )$ &$1.07$ &$16$&$(4, 2, 1)$\\
\hline
$2.22 $         &$\checkmark$&$\frac{1 }{19 }( 5, 14, 10, 15, 24, 20, 25)$ &$0.950$&$14 $&$(4, 2, 1)$\\
\hline
$2.23 $         &-&$\frac{ 4}{11 }(1, 1, 2, 2, 3, 3, 4 )$ &  -   &$ 13$&$(3, 1)$\\
\hline
$2.24 $         &$\checkmark$&$\frac{1}{17}(5,9,10,14,19,19,24)$&$0.895$&$13$&$(4, 2, 1)$\\
\hline
$ 2.25$         &$\checkmark$&$\frac{5}{17}(1,2,2,3,3,4,5)     $&$0.895$&$14$&$(3, 2, 1)$\\
\hline
$2.26$          &$\checkmark$&$\frac{1}{17}(7,10,7,17,14,21,24)$&$0.895$&$13$&$(4, 2)$\\
\hline
$2.27 $         &$\checkmark$&$\frac{1}{15}(6,7,14,13,13,19,20)$&$1.15$&$17$&$(3, 2)$\\
\hline
$2.28 $         &$\checkmark$&$\frac{ 4}{37 }(4,5,6,8,9,10,14)$&$1.06$&$16$&$(3, 1)$\\
\hline
$2.29 $         &-&$\frac{1 }{41 }(15,22,30,29,37,52,59)$&  -   &$14$&$(3, 2)$\\
\hline
$2.30$          &$\checkmark$&$\frac{4 }{27 }(2, 4, 5, 5, 6, 8, 10)$&$0.931$&$15$&$(3, 2, 1)$\\
\hline
$2.31$          &$\checkmark$&$\frac{1 }{39 }(14, 15, 27, 29, 42, 43, 57 )$&$0.848$&$13$&$(4, 2, 1)$\\
\hline
$2.32$          &$\checkmark$&$\frac{2 }{27 }(3, 10, 8, 13, 11, 16, 19 )  $&$0.931$&$14$&$(4, 2, 1)$\\
\hline
$2.33$          &$\checkmark$&$\frac{1 }{33 }(10, 18, 15, 28, 33, 38, 48 )$&$0.805$&$12$&$(4, 2, 1)$\\
\hline
$2.34$          &$\checkmark$&$\frac{1 }{47 }( 22, 20, 21, 42, 43, 62, 64)$&$0.854$&$12$&$(4, 2)$\\
\hline
\end{tabular}
\end{center}
\begin{center}
\begin{tabular}{|l|c|c|c|c|c|}
\hline
\multicolumn{6}{     |c|      }  {\centering Rank two }  \tabularnewline
\hline
\multicolumn{1}{ |m{1.5cm }|  }  {\centering $\mathfrak{n}$ } &
\multicolumn{1}{  m{0.5cm }|  }  {\centering EN } &
\multicolumn{1}{  m{4.5cm }|  }  {\centering pre-Einstein \\ derivation} &
\multicolumn{1}{  m{1cm}|     }  {\centering Min } &
\multicolumn{1}{  m{0.75cm }| }  {\centering $\dim$ $\Der$} &
\multicolumn{1}{  m{2cm }|    }  {\centering $\dim$ \\ DCS} \tabularnewline
\hline
\hline
$2.35$          &$\checkmark$&$\frac{1 }{13 }(5, 6, 7, 11, 12, 17, 18 )$   &$0.867$&$12$&$(4, 2)$\\
\hline
$2.36 $         &$\checkmark$&$\frac{ 1}{ 23}(18, 13, 10, 15, 28, 23, 33 )$&$1.10 $&$16$&$(3, 1)$\\
\hline
$2.37$          &$\checkmark$&$\frac{4 }{11 }(1, 1, 2, 2, 3, 3, 4)$&$0.846$ &$13$&$(4, 3, 1)$\\
\hline
$2.38$          &$\checkmark$&$\frac{ 7}{16 }(1, 1, 2, 2, 2, 3, 3 )$&$1.14$&$16$&$(3, 2)$\\
\hline
$2.39$          &$\checkmark$&$\frac{ 1}{16 }(5, 11, 10, 16, 15, 21, 20 )$&$1.14$&$17$&$(4, 2)$ \\
\hline
$2.40$          &$\checkmark$&$\frac{1 }{23 }(9, 10, 19, 18, 28, 29, 27)$&$1.10$&$16 $&$(4, 2)$\\
\hline
$2.41$          &$\checkmark$&$\frac{ 1}{7 }(2, 3, 5, 6, 7, 8, 10 )$&$0.875$&$13 $&$(4, 3, 1)$\\
\hline
$2.42$          &-&$\frac{1 }{41 }(11, 22, 30, 33, 41, 52, 55 )$&  -  &$14$&$(4, 2)$\\
\hline
$2.43$          &$\checkmark$&$\frac{1 }{ 37}(11, 29, 20, 40, 31, 42, 51 )$&$1.06$&$16$&$(4, 2)$\\
\hline
$2.44$          &$\checkmark$&$\frac{1 }{37 }( 15, 19, 23, 34, 38, 42, 53)$&$1.06$&$16$&$(4, 1)$\\
\hline
$2.45$          &$\checkmark$&$\frac{1 }{14}(6, 7, 11, 12, 13, 19, 18 )$&$1.17$&$17$&$(3, 1)$\\
\hline
\multicolumn{6}{c}  {\footnotesize{\textsc{Table 2.} Classification of $7$-dimensional indecomposable}}  \tabularnewline
\multicolumn{6}{c}  {\footnotesize{Einstein nilradicals. Rank two case.}}  \tabularnewline
\end{tabular}
\end{center}

\begin{example} \textbf{Degeneration by action of the group $G_{\phi}$} \newline
In this example, we show how to find a non-trivial degeneration by acting with a one-parameter diagonal  subgroup. We consider $\mathfrak{g}_{2.2}$
$$
\mu:= \left\{\begin{array}{l}
[e_1, e_2] = e_5,   [e_1, e_3] = e_6,    [e_1, e_4] = 2e_7,  [e_2, e_3] = e_4, {[e_2, e_6]} = e_7,  \\
{[e_3, e_5]} = -e_7,  [e_3, e_6] = e_7
\end{array} \right.
$$
$\ggo_{2.2}$ is a nilpotent Lie algebra of rank $2$ with a maximal torus of derivations generated by $D_1= \diag(1,0,0,0,1,1,1)$ and $D_2=\diag(0,1,1,2,1,1,2)$ (with respect to the basis $\{e_i\}$). If $\phi=aD_1+bD_2$ is a pre-Einstein derivation, then $a,b$ are found by solving the linear equations system
$$
\left\{\begin{array}{l}
\tr(\phi D_1) = \tr(D_1),\\
\tr(\phi D_2) = \tr(D_2).
\end{array}\right.
$$
We get that $\phi=\frac{1}{2}D_1+\frac{1}{2}D_2 = \frac{1}{2}(1,1,1,2,2,2,3)$, and so if $\ggo_{2.2}$ is an Einstein nilradical, then it has eigenvalue type $(1<2<3 ; 3,3,1)$. The method used in Example \ref{example1} applied to this algebra is very cumbersome because the polynomial equations system attached to the type  $(1<2<3 ; 3,3,1)$ has infinitely many solutions whose algebras are pairwise non isomorphic; the one-parameter family $3.1(\lambda)$ with $\lambda \neq 0 \neq 1$ is Einstein nilradical of type $(1<2<3 ; 3,3,1)$. By analyzing solutions of such system, we see that there is no any solution that corresponds to $\ggo_{2.2}$. Therefore we use another way to proof that $\ggo_{2.2}$ is not an Einstein Nilradical. By Theorem \ref{nikoclosedorbit}, we try to find a non-trivial degeneration of $\ggo_{2.2}$ by the action of a $1$-parameter diagonal subgroup of $G_{\phi}$. Let $X\in \ggo_{\phi}$ be a diagonal matrix, $X=\diag(a_1,...,a_7)$. As $\tr (X \phi) =0$ and $\tr(X)=0$ then $a_4=-a_5-a_6-2a_7$ and $a_1=a_7-a_2-a_3$. The action of $g_t=\exp(tX)$ on $\mu$ is
\begin{equation*}
\mu_t:= \left\{\begin{array}{l}
{[e_1, e_2]}=  \euler^{t  ( -a_7+ a_3 + a_5) }e_5,
{[e_1, e_3]}=  \euler^{t  ( -a_7+ a_2 + a_6 )}e_6, \\
{[e_1, e_4]}= 2\euler^{t  (  a_2+ a_3 + a_5 + a_6 + 2a_7 ) } e_7,
{[e_2, e_3]}=  \euler^{t  ( -a_2- a_3 - a_5 - a_6 - 2a_7 ) } e_4, \\
{[e_2, e_6]}=  \euler^{t  (  a_7- a_2 - a_6 ) } e_7,
{[e_3, e_5]}= -\euler^{t  (  a_7- a_3 - a_5 ) } e_7,\\
{[e_3, e_6]}=  \euler^{t  (  a_7- a_3 - a_6 ) } e_7
\end{array} \right.
\end{equation*}
To find a non-trivial degeneration as $t \rightarrow \infty$, we need that exponents be negative. By doing the same trick as in Example \ref{example0}, we get a polynomial equations system
$$
\left\{\begin{array}{l}
-a_7+ a_3 + a_5 = -b_1^2,\, -a_7+ a_2 + a_6 = -b_2^2 \\
 a_2+ a_3 + a_5 + a_6 + 2a_7 = -b_3^2 \\
-a_2- a_3 - a_5 - a_6 - 2a_7 = -b_4^2 \\
 a_7- a_2 - a_6 = -b_5^2,\, a_7- a_3 - a_5 = -b_6^2 \\
 a_7- a_3 - a_6 = -b_8^2 \\
\end{array} \right.
$$
whose solutions are given by
\begin{equation}
\begin{array}{l}
a_2 = \frac{1}{4}b_4^2 - a_5 - b_7^2 + \frac{3}{4} b_5^2 +\frac{3}{4}b_6^2, \,
a_3 = -a_5 + \frac{1}{4}b_4^2 - \frac{1}{4}b_5^2 + \frac{3}{4}b_6^2, \\
a_5 = a_5, \,
a_6 = a_5 + b_7^2 - b_6^2, \,
a_7 = \frac{1}{4}b_4^2 - \frac{1}{4}b_5^2 - \frac{1}{4}b_6^2, \,
b_1 = \pm \im b_6, \\
b_2 = \pm \im b_5, \,
b_3 = \pm \im b_4, \,
b_4 = b_4, \,
b_5 = b_5, \,
b_6 = b_6, \,
b_7 = b_7
\end{array}
\end{equation}
As the solutions must be real, then $b_4=b_5=b_6=0$, and so that $b_1=b_2=b_3=0$. In order that the degeneration be non-trivial, we need $b_7 \neq 0$. By setting $b_7 = 1$ and $a_5 = 0$ we get $a_2=-1$, $a_3=0$, $a_6=1$, $a_7=0$, $a_1=1$ and $a_4=-1$, and thus
$$
X=\diag(1,-1,0,-1,0,1,0)
$$
As $t$ tends to infinite $g_t \cdot \mu \rightarrow \widetilde{\mu}$
$$
\widetilde{\mu}:= \left\{\begin{array}{l}
[e_1, e_2] = e_5,   [e_1, e_3] = e_6,    [e_1, e_4] = 2e_7,  [e_2, e_3] = e_4, {[e_2, e_6]} = e_7,  \\
{[e_3, e_5]} = -e_7
\end{array} \right.
$$
$(\RR^7,\widetilde{\mu})$ is not isomorphic to $(\RR^7,\mu)$ since $\dim \Der (\RR^7,\mu) = 15$ and \newline $\dim \Der (\RR^7,\widetilde{\mu}) = 17$. Therefore the $G_{\phi}$-orbit of $\mu$ is not closed and in consequence $\ggo_{2.2}$ is not an Einstein nilradical.
\end{example}
\begin{center}
\begin{tabular}{|l|c|c|c|c|c|}
\hline
\multicolumn{6}{     |c|      }  {\centering Rank three}  \tabularnewline
\hline
\multicolumn{1}{ |m{1.5cm }|  }  {\centering $\mathfrak{n}$ } &
\multicolumn{1}{  m{0.5cm }|  }  {\centering EN } &
\multicolumn{1}{  m{4.5cm }|  }  {\centering pre-Einstein \\ derivation} &
\multicolumn{1}{  m{1cm}|     }  {\centering Min } &
\multicolumn{1}{  m{0.75cm }| }  {\centering $\dim$ $\Der$} &
\multicolumn{1}{  m{2cm }|    }  {\centering $\dim$ \\ DCS} \tabularnewline
\hline
\hline
$3.1(i_\lambda) $ &\multirow{2}{*}{$\checkmark$}&\multirow{2}{*}{$\frac{1 }{2 }( 1, 1, 1, 2, 2, 2, 3)$}&\multirow{2}{*}{1}&\multirow{2}{*}{$15$}&\multirow{2}{*}{$(4, 1)$}\\
$\lambda \neq 0,1$&&&&&\\
\hline
$3.1(i_\lambda) $ &\multirow{2}{*}{-}&\multirow{2}{*}{$\frac{1 }{2 }( 1, 1, 1, 2, 2,2,3)$}&\multirow{2}{*}{-}&\multirow{2}{*}{$15$}&\multirow{2}{*}{$(4, 1)$}\\
$\lambda =0$& & & & & \\
\hline
$3.1(i_\lambda) $ &\multirow{2}{*}{-}&\multirow{2}{*}{$\frac{1 }{2 }( 1, 1, 1, 2, 2,2,3)$}&\multirow{2}{*}{-}&\multirow{2}{*}{$15$}&\multirow{2}{*}{$(4, 1)$}\\
$\lambda =1$& & & & & \\
\hline
$3.1(iii)$&-&$\frac{ 1}{2 }( 1, 1, 1, 2, 2, 2, 3)$ & - &$ 15$&$(3, 1)$\\
\hline
\end{tabular}
\end{center}
\begin{center}
\begin{tabular}{|l|c|c|c|c|c|}
\hline
\multicolumn{6}{     |c|      }  {\centering Rank three}  \tabularnewline
\hline
\multicolumn{1}{ |m{1.5cm }|  }  {\centering $\mathfrak{n}$ } &
\multicolumn{1}{  m{0.5cm }|  }  {\centering EN } &
\multicolumn{1}{  m{4.5cm }|  }  {\centering pre-Einstein \\ derivation} &
\multicolumn{1}{  m{1cm}|     }  {\centering Min } &
\multicolumn{1}{  m{0.75cm }| }  {\centering $\dim$ $\Der$} &
\multicolumn{1}{  m{2cm }|    }  {\centering $\dim$ \\ DCS} \tabularnewline
\hline
\hline
$3.2$     &$\checkmark$&$\frac{2 }{13 }(1, 5, 6, 6, 7, 7, 8 )$&$1.18$&$17$&$(4, 2, 1)$\\
\hline
$3.3$     &$\checkmark$&$\frac{ 1}{21 }( 5, 12, 15, 17, 27, 22, 27)$ &$0.954$&$15$&$(4, 2, 1)$\\
\hline
$3.4$     &$\checkmark$&$\frac{ 1}{12 }(6, 5, 5, 11, 11, 16, 16 )$&$0.857$&$13$&$(4, 2,)$ \\
\hline
$3.5$     &$\checkmark$&$\frac{1 }{20 }( 10, 7, 11, 17, 21, 24, 28)$&$0.909$&$14 $&$(4, 2)$\\
\hline
$3.6$     &$\checkmark$&$\frac{1 }{13 }( 5, 9, 7, 14, 12, 16, 17)$&$1.18$&$18$&$(4, 1)$\\
\hline
$3.7$     &-&$\frac{1 }{ 3}(1, 2, 2, 2, 3, 4, 4)$      &   -  &$ 15$&$(3, 1)$\\
\hline
$3.8 $    &$\checkmark$&$\frac{ 1}{5 }( 2, 3, 4, 4, 5, 6, 7)$&$1.25$&$19$& $(3, 1)$\\
\hline
$3.9 $    &$\checkmark$&$\frac{ 2}{13 }( 3, 3, 5, 6, 6, 8, 9)$&$1.18$&$ 18$&$(3, 1)$\\
\hline
$3.10$    &$\checkmark$&$\frac{1}{20 }( 12, 7, 11, 16, 19, 23, 30)$&$0.909$&$15 $&$(3, 1)$\\
\hline
$3.11$    &$\checkmark$&$\frac{1}{13}(5, 7, 12, 10, 12, 17, 17)$&$1.18$&$18$&$(3, 1)$\\
\hline
$3.12$    &$\checkmark$&$\frac{ 5}{ 8}( 1, 1, 1, 1, 2, 2, 2)$&$1.33$&$19$&$(3)$\\
\hline
$3.13$    &$\checkmark$&$\frac{1 }{21 }( 8, 11, 15, 15, 19, 27, 30)$&$0.954$&$16$&$(3, 2)$\\
\hline
$3.14 $   &$\checkmark$&$\frac{1 }{13 }(5, 9, 9, 10, 14, 14, 19 )$&$1.18$&$18$&$(3, 1)$\\
\hline
$3.15 $   &$\checkmark$&$\frac{ 1}{11 }(6, 5, 7, 9, 11, 13, 16 )$&$1.10$&$17 $&$(3, 1)$\\
\hline
$3.16 $   &$\checkmark$&$\frac{1 }{12 }( 5, 8, 5, 8, 13, 13, 18)$&$0.857$&$14 $&$(3, 1)$\\
\hline
$3.17 $   &$\checkmark$&$\frac{5 }{21 }(1, 3, 3, 3, 4, 5, 6 )   $&$0.954$&$16$ &$(3, 2, 1)$\\
\hline
$3.18 $   &$\checkmark$&$\frac{2 }{13 }(3, 4, 5, 5, 6, 7, 10 )$&$1.18$&$19$&$(2, 1)$\\
\hline
$3.19 $   &$\checkmark$&$\frac{ 1}{8 }( 5, 6, 6, 5, 6, 11, 11)$&$1.33$&$ 19$&$(2)$\\
\hline
$3.20 $   &$\checkmark$&$\frac{1 }{ 5}(1, 4, 4, 5, 5, 6, 6 )$&1.25&$19 $&$(4, 2)$\\
\hline
$3.21 $   &$\checkmark$&$\frac{1 }{21 }(6, 15, 11, 21, 17, 27, 28 )$&$0.954$&$15$&$(4, 2)$\\
\hline
$3.22$    &$\checkmark$&$\frac{1 }{20 }(7, 12, 10, 19, 17, 29, 24 )$&$0.909$&$15$&(4, 2)\\
\hline
$3.23 $   &$\checkmark$&$\frac{1 }{11 }( 4, 5, 9, 9, 13, 14, 13)$&$1.10$&$17$&$(4, 2)$\\
\hline
$3.24 $   &$\checkmark$&$\frac{ 1}{ 6} ( 5, 3, 4, 4, 8, 7, 7)$ &1.50&$22$& $(3)$\\
\hline
\multicolumn{6}{c}  {\footnotesize{\textsc{Table 3.} Classification of $7$-dimensional indecomposable}}  \tabularnewline
\multicolumn{6}{c}  {\footnotesize{Einstein nilradicals. Rank three case.}}  \tabularnewline
\end{tabular}
\end{center}
\begin{example}\textbf{Applying Nikolayevsky's nice basis criterium} \newline
We consider $\ggo_{3.1(i_{\lambda})}$
$$\left\{
\begin{array}{l}
[e_1,e_2]=e_4, [e_1,e_3]=e_5, [e_1, e_6]=e_7, [e_2, e_3]=e_6, [e_2, e_5]=\lambda e_7, \\
{[e_3, e_4]}=(\lambda - 1) e_7
\end{array}\right.
$$
The basis $\{e_1,...,e_7\}$ is a nice basis to $\ggo_{3.1(i_{\lambda})}$ with any $\lambda$. We can use Theorem \ref{nikonicebasis}. If $\lambda \neq 0,1$, the matrix $U$ is given by
$$
\left( \begin {array}{cccccc} 3&1&1&1&1&-1\\ \noalign{\medskip}1&3&1&
1&-1&1\\ \noalign{\medskip}1&1&3&-1&1&1\\ \noalign{\medskip}1&1&-1&3&1
&1\\ \noalign{\medskip}1&-1&1&1&3&1\\ \noalign{\medskip}-1&1&1&1&1&3
\end {array} \right).
$$
The general solution to the problem $Ux=[1]_6$ is give by
$$
x=\left(t_2,t_1,\frac{1}{2}-t_1-t_2,\frac{1}{2}-t_1-t_2,t_1,t_2\right)^T.
$$
By taking $t_1$ and $t_2$ such that $0 < t_1 < \frac{1}{2}$, $t_2 < \frac{1}{2}-t_1$ we get a solution with positive coordinates. Hence, $\ggo_{3.1(i_{\lambda})}$ with  $\lambda \neq 0,1$ is an Einstein Nilradical.

If $\lambda=0$ the matrix $U$ correspond to $\ggo_{3.1(i_{0})}$ is
$$
\left( \begin {array}{ccccc} 3&1&1&1&-1\\ \noalign{\medskip}1&3&1&1&1
\\ \noalign{\medskip}1&1&3&-1&1\\ \noalign{\medskip}1&1&-1&3&1
\\ \noalign{\medskip}-1&1&1&1&3\end {array} \right).
$$
The general solution to the problem $Ux=[1]_6$ is
$
\left(t, 0 , \frac{1}{2}-t , \frac{1}{2}-t , t\right)^T,
$
and it follows from the nullity of the second coordinate, one obtains $\ggo_{3.1(i_{0})}$ is not an Einstein nilradical. By arguing analogously, one obtains $\ggo_{3.1(i_{1})}$ is not an Einstein nilradical.
\end{example}

\begin{center}
\begin{tabular}{|l|c|c|c|c|c|}
\hline
\multicolumn{6}{     |c|      }  {\centering Rank four }  \tabularnewline
\hline
\multicolumn{1}{ |m{1.5cm }|  }  {\centering $\mathfrak{n}$ } &
\multicolumn{1}{  m{0.5cm }|  }  {\centering EN } &
\multicolumn{1}{  m{4.5cm }|  }  {\centering pre-Einstein \\ derivation} &
\multicolumn{1}{  m{1cm}|     }  {\centering Min } &
\multicolumn{1}{  m{0.75cm }| }  {\centering $\dim$ $\Der$} &
\multicolumn{1}{  m{2cm }|    }  {\centering $\dim$ \\ DCS} \tabularnewline
\hline
\hline
$4.1 $&$\checkmark$&$\frac{ 1}{ 7}( 4, 5, 4, 5, 9, 8, 9)$&$1.4 $&$20$&$(3)$ \\
\hline
$4.2 $&$\checkmark$&$\frac{ 2}{ 5}( 1, 2, 2, 2, 3, 3, 3)$&$1.67$&$25$&$(3)$\\
\hline
$4.3 $&$\checkmark$&$\frac{ 1}{7 }( 5, 5, 6, 5, 4, 10, 9)$&$1.4 $&$21$& $(2)$\\
\hline
$4.4 $&$\checkmark$&$\frac{4 }{5}( 1, 1, 1, 1, 1, 1, 2)$&$1.67$& $28 $&$(1)$\\
\hline
\multicolumn{6}{c}  {\footnotesize{\textsc{Table 4.} Classification of $7$-dimensional indecomposable}}  \tabularnewline
\multicolumn{6}{c}  {\footnotesize{Einstein nilradicals. Rank four case.}}  \tabularnewline
\end{tabular}
\end{center}

\section*{Acknowledgments}

I would like to thank warmly my PhD advisor, professor Jorge Lauret for his dedication and helpful corrections during the writing of this paper.

I would also like to thank the referees for their generous comments on the result of this paper; it is encouraging to receive such positive feedback from the anonymous expert panel.

Finally, I would like to give my special acknowledge to Nadina Rojas and Diego Mej\'ia-Guzm\'an for their enthusiastic support in all possible ways.

The author is supported by a CONICET doctoral fellowship (Argentina).


\end{document}